\long\def\drop#1{}
\newtheorem{theorem}{Theorem}
\newtheorem{lemma}[theorem]{Lemma}
\def\X{\mathscr X}
\def\R{\mathbb R}
\def\N{\mathbb N}
\def\T{\mathbb T}
\def\Z{\mathbb Z}
\def\P{\mathscr P_2}
\def\bP{\mathbb P}
\def\M{\mathcal M}
\let\e\varepsilon
\let\ph\varphi
\let\weakto\rightharpoonup
\DeclareMathOperator*\argmin{argmin}
\def\div{\mathop{\mathrm{div}}}
\def\pref#1{(\ref{#1})}
\DeclareMathOperator\supp{supp}
\newenvironment{remark}%
  {\par\medbreak%\refstepcounter{theorem}%
    \noindent\textbf{Remark.}}%
  {\par\medskip}
\newcommand{\ssup}[1] {{{\scriptscriptstyle{({#1}})}}} 
\begin{document}

\author{Stefan Adams % University of Warwick
\and Nicolas Dirr %University of Bath
\and Mark A. Peletier %Department of Mathematics and Institute for Complex Molecular Systems, Technische Universiteit Eindhoven, The Netherlands
\and Johannes Zimmer %University of Bath
}

%\title[Foundation of the Wasserstein gradient-flow formulation of diffusion]{Foundation of the Wasserstein gradient-flow formulation of diffusion: a large-deviation approach}

\title[From a large-deviations principle to the Wasserstein gradient flow]{From a large-deviations principle to the Wasserstein gradient flow: a new micro-macro passage}

\date{\today}
 
\begin{abstract}
We study the connection between a system of many independent Brownian particles on one hand and the deterministic diffusion equation on the other. For a fixed time step $h>0$, a large-deviations rate functional $J_h$ characterizes the behaviour of the particle system at $t=h$ in terms of the initial distribution at $t=0$. For the diffusion equation, a single step in the time-discretized entropy-Wasserstein gradient flow is characterized by the minimization of a functional $K_h$. 
We establish a new connection between these systems by proving that $J_h$ and $K_h$ are equal up to second order in $h$ as $h\to0$.

This result gives a microscopic explanation of the origin of the entropy-Wasserstein gradient flow  formulation of the diffusion equation. Simultaneously, the limit passage presented here gives a physically natural description of the underlying particle system by describing it as an entropic gradient flow. 

\smallskip
\textbf{Key words and phrases:} Stochastic particle systems, generalized gradient flows, variational evolution equations, hydrodynamic limits, optimal transport, Gamma-convergence.
\end{abstract}

\maketitle

\section{Introduction}

\subsection{Particle-to-continuum limits}

In 1905, Einstein showed~\cite{Einstein05}
 how the bombardment of a particle by surrounding fluid molecules leads to behaviour that is described by the macroscopic diffusion equation (in one dimension)
\begin{equation}
\label{eq:diffusion}
\partial_t \rho = \partial_{xx}\rho \qquad\text{for } (x,t) \in \R\times \R^+.
\end{equation}
There are now many well-established derivations of continuum equations from stochastic particle models, both formal and rigorous~\cite{DeMasiPresutti92,KipnisLandim99}.

In this paper we investigate a new method to connect some stochastic particle systems with their upscaled deterministic evolution equations, in situations where these equations can be formulated as gradient flows. This method is based on a connection between two concepts: large-deviations rate functionals associated with stochastic processes on one hand, and  gradient-flow formulations of deterministic differential equations on the other. We explain these below.

The paper is organized around a simple example: the empirical measure of a family of $n$ Brownian particles $X^{\ssup{i}}(t)\in\R, t\ge 0 $, has a limit as $n\to\infty$, which is characterized by equation~\eqref{eq:diffusion}. The natural variables to compare are the empirical measure of the position at time $ t $, i.e.  $L^t_n = n^{-1} \sum_{i=1}^n \delta_{X^{\ssup{i}}(t)}$, which describes the density of particles, and the solution $\rho(\cdot,t)$ of~\eqref{eq:diffusion}. We take a time-discrete point of view and consider time points $t=0$ and $t=h>0$. 

%Also, the rigorous result will be formulated for a bounded domain $[0,L]\subset\R$. 
%\marginpar{ALL: we talk about $\R$ here, but we prove a result on a %finite domain. Comment here?}

\subsection*{Large-deviations principles}

A large-deviations principle characterizes the fluctuation behaviour of a stochastic process. We consider the behaviour of $L^h_n$ under the condition of a given initial distribution $L^0_n\approx\rho_0\in \M_1(\R)$, where $\M_1(\R)$ is the space of probability measures on $\R$. A large-deviations result expresses the probability of finding $L_n^h$ close to some $\rho\in \M_1(\R)$ as 
\begin{equation}
\label{ldp:intro}
\mathbb{P}\bigl(L^h_n \approx \rho\,|\, L^0_n\approx \rho_0\bigr) \;\approx\; \exp \bigl[-nJ_h(\rho\,;\rho_0)\bigr]\qquad\text{as }n\to\infty.
\end{equation}
The functional $J_h$ is called the \emph{rate function}. By~\eqref{ldp:intro}, $J_h(\rho\,;\rho_0)$ characterizes the probability of observing a given realization $\rho$: large values of $J_h$ imply small probability. Rigorous statements are given below.

\subsection*{Gradient flow-formulations of parabolic PDEs} An equation such as~\eqref{eq:diffusion}  characterizes an evolution in a state space $\X$, which in this case we can take as $\X = \M_1(\R)$ or $\X=L^1(\R)$. A \emph{gradient-flow formulation} of the equation is an equivalent formulation with a specific structure. It employs two quantities, a functional $E\colon\X\to\R$ and a \emph{dissipation metric} $d\colon\X\times \X\to\R$. Equation~\eqref{eq:diffusion} can be written as the gradient flow of the entropy functional $E(\rho) = \int \rho\log\rho\, dx$ with respect to the Wasserstein metric $d$ (again, see below for precise statements). 
We shall use the following property: the solution $t\mapsto \rho(t, \cdot)$ of~\eqref{eq:diffusion} can be approximated by the time-discrete sequence $\{\rho^n\}$ defined recursively by 
\begin{equation}
  \label{def:BackwardEuler}
  \rho^n \in \argmin_{\rho\in\X} K_h(\rho\;;\rho^{n-1}), \qquad 
  K_h(\rho\;;\rho^{n-1}) :=\frac1{2h} d(\rho,\rho^{n-1})^2 + E(\rho) - E(\rho^{n-1}).
\end{equation}

\subsection*{Connecting large deviations with gradient flows}

The results of this paper are illustrated in the diagram below. 
\makeatletter
\atdef@ O#1O#2O{\CD@check{O..O..O}{\llap{$\m@th\vcenter{\hbox 
 {$\scriptstyle#1$}}$}\Big\updownarrow 
 \rlap{$\m@th\vcenter{\hbox{$\scriptstyle#2$}}$}&&}} 
\makeatother
%\[
%\begin{CD} 
%\text{rate functional (A)}@>\text{this paper}>\text{Gamma-convergence}> \text{variational formulation (D)}\\ 
%@O\text{large-deviation principle}OO @OO{ }O\\ 
%\text{Brownian particle system (B)} @>\text{hydrodynamic limit}>> \text{continuum equation~\pref{eq:diffusion} (C)} 
%\end{CD}
%\]
\def\ntext#1{\text{\normalsize #1}}
\begin{equation}
\label{diag:commuting}
\begin{CD} 
\substack{\textstyle\text{discrete-time}\\[\jot]\ntext{rate functional }\textstyle J_h}@>\text{this paper}>\substack{\text{Gamma-convergence}\\h\to0}
> \substack{\textstyle\text{discrete-time variational}\\[\jot]\textstyle\text{formulation }K_h}\\ 
@O\substack{\text{large-deviations principle}\\n\to\infty}OO 
  @OO{h\to0}O\\ 
\text{Brownian particle system} @>\text{continuum limit}>n\to\infty> \text{continuum equation~\pref{eq:diffusion}} 
\end{CD}
\end{equation}
The lower level of this diagram is the classical connection: in the limit $n\to\infty$, the empirical measure $t\mapsto L_n^t$ converges to the solution $\rho$ of equation~\pref{eq:diffusion}. In the left-hand column the large-deviations principle mentioned above connects the particle system with the rate functional $J_h$. The right-hand column is the formulation of equation~\pref{eq:diffusion} as a gradient flow, in the sense that the time-discrete approximations constructed by successive minimization of $K_h$ converge to~\pref{eq:diffusion} as $h\to0$. 

Both functionals $J_h$ and $K_h$ describe a single time step of length $h$: $J_h$ characterizes the fluctuations of the particle system after time $h$, and $K_h$ characterizes a single time step of length~$h$ in the time-discrete approximation of~\eqref{eq:diffusion}.  In this paper we make a new connection, a Gamma-convergence result relating $J_h$ to $K_h$, indicated by the top arrow. It is this last connection that is the main mathematical result of this paper.

This result is interesting for a number of reasons. First, it places the entropy-Wasserstein gradient-flow formulation of~\eqref{eq:diffusion} in the context of large deviations for a system of Brownian particles. In this sense it gives a microscopic justification of the coupling between the entropy functional and the Wasserstein metric, as it occurs in~\eqref{def:BackwardEuler}. Secondly, it shows that $K_h$ not only characterizes the deterministic evolution via its minimizer, but also the fluctuation behaviour via the connection to $J_h$. Finally, it suggests a principle that may be much more widely valid, in which gradient-flow formulations have an intimate connection with large-deviations rate functionals associated with stochastic particle systems.

\medskip

The structure of this paper is as follows. We first introduce the specific system of this paper and formulate the existing large-deviations result~\eqref{ldp:intro}. In Section~\ref{sec:gf} we discuss the abstract gradient-flow structure and recall the definition of the Wasserstein metric. Section~\ref{sec:central-statement} gives the central result,  and Section~\ref{sec:Discussion} provides a discussion of the background and relevance. Finally the two parts of the proof of the main result, the upper and lower bounds, are given in Sections~\ref{sec:upperbound} and~\ref{sec:lowerbound}.

Throughout this paper, measure-theoretical notions such as absolute continuity are with respect to the Lebesgue measure, unless indicated otherwise. By abuse of notation, we will often identify a measure with its Lebesgue density.

\section{Microscopic model and Large-Deviations Principle}
\label{sec:Micr-model-Large}

%\subsection{The particle system and the large deviations principle}\label{sec:Micro-macro-conn}

Equation~\pref{eq:diffusion} arises as the hydrodynamic limit of a wide variety of particle systems. In this paper we consider the simplest of these, which is a collection of $n$ independently moving \emph{Brownian particles}. A Brownian particle is a particle whose position in $\R$ is given by a Wiener process, for which the probability of a particle moving from $x\in\R$ to $y\in \R$ in time $h>0$ is given by the probability density
\begin{equation}
  \label{def:P}
  p_h(x,y) := \frac1{(4\pi h)^{1/2}} {e}^{-(y-x)^2/4h}.
\end{equation}
Alternatively, this corresponds to the Brownian bridge measure for the $n$ random elements in the space of all continuous functions  $ [0,h]\mapsto \R $. We work with Brownian motions having generator $ \Delta $ instead of $ \frac{1}{2}\Delta $, and we write $ \mathbb{P}_x $ for the probability measure under which $ X=X^{\ssup{1}} $ starts from $ x\in\R $. 
%The canonical Brownian bridge probability with time horizon  $ [0,h] $ and with initial site $ x\in\R $ and terminal site $ y\in\R $ is defined as
%$$
%\mu_{x,y}^h(A)=\frac{\mathbb{P}_x(X\in A; X(h)\in { d} y)}{d y},\quad A\subset\mathcal{C}_h\mbox{ measurable}.
%$$
%(Note $ p_h(x,y)=\mu_{x,y}^h(\mathcal{C}_h) $). 
%\medskip

We now specify our  system of Brownian particles. Fix a measure $\rho_0\in \M_1(\R)$ which will serve as the initial distribution of the $ n $ Brownian motions $ X^{\ssup{1}},\ldots, X^{\ssup{n}} $ in $ \R $. 
%We approximate this initial density by finite sets of particles by choosing, for each $n\in\N$, positions $x_{n,i}\in \R$, $i=1\ldots n$, such that
%\begin{equation}
%  \label{approx:initial_state}
%  \frac1n \sum_{i=1}^n \delta_{x_{n,i}} \weakto \rho_0 
%  \qquad\text{as }n\to\infty.
%\end{equation}
For each $n\in \N$, we let $(X^{\ssup{i}})_{i=1,\ldots,n} $ be a collection of independent Brownian motions, whose distribution is given by the product $ \bP_n=\bigotimes_{i=1}^n\bP_{\rho_0} $, where $ \bP_{\rho_0}=\rho_0(d x)\bP_x $ is the probability measure under which $ X=X^{\ssup{1}} $ starts with initial distribution $ \rho_0 $.

%and whose initial positions are  $X^{\ssup{i}}(0)=x_{n,i}$. Note that the particles are independently but not identically distributed.

It follows from the definition of the Wiener process and the law of large numbers that the {empirical measure} $L^t_n$, the random probability measure in  $\M_1(\R)$ defined by
\[
L_n^t :=\frac1n \sum_{i=1}^n \delta_{X^{\ssup{i}}(t)},
\]
converges in probability to the solution $\rho$ of~\pref{eq:diffusion} with initial datum $\rho_0$. In this sense the equation~\pref{eq:diffusion} is the many-particle limit of the Brownian-particle system. Here and in the rest of this paper the convergence $\weakto$ is the weak-$\ast$ or weak convergence for probability measures, defined by the duality with the set of continuous and bounded functions $C_b(\R)$.

%\subsection{The large-deviation principle}
%\label{sec:large-devi-princ}

Large-deviations principles are given for many empirical measures of the $ n $ Brownian motions under the product measure $ \bP_n $. Of particular interest to us is  the empirical measure for the pair of the initial and terminal position for a given time horizon $ [0,h] $, that is, the empirical pair measure
$$
Y_n=\frac{1}{n}\sum_{i=1}^n\delta_{(X^{\ssup{i}}(0),X^{\ssup{i}}(h))}.
$$ 
Note that the empirical measures $ L_n^0 $ and $ L_n^h $ are the first and second marginals of $ Y_n $.

The \emph{relative entropy} $H\colon\M_1(\R\times\R)^2\to[0,\infty]$ is the functional
\[
H(q\,|\,p):=\begin{cases} \int\limits_{\R\times\R} f(x,y)\log f(x,y) \;p(d (x,y)) & \mbox{ if } q\ll p, f=\frac{d q}{d p}\\ + \infty & \mbox{ otherwise.}\end{cases}
\]
For given $ \rho_0,\rho\in\M_1(\R) $ denote by 
\begin{equation}
\label{def:Gamma}
\Gamma(\rho_0,\rho)=\{q\in\M_1(\R\times\R)\colon \pi_0q=\rho_0,\pi_1q=\rho\}
\end{equation}
the set of pair measures whose first marginal $ \pi_0 q(d \cdot):=\int_\R q(d\cdot, d y) $ equals $ \rho_0 $ and whose second marginal $ \pi_1q(d \cdot):=\int_\R q(d x, d\cdot)$ equals $ \rho $. 
For a given $ \delta>0 $ we denote by $ B_\delta=B_\delta(\rho_0) $ the open ball with radius $ \delta>0 $ around $ \rho_0 $ with respect to the L\'{e}vy metric on $ \M_1(\R) $~\cite[Sec.~3.2]{DeuschelStroock89}.

\begin{theorem}[\textbf{Conditional large deviations}]
\label{th:LDP1}
Fix $\delta>0$ and $\rho_0\in\M_1(\R)$. 
The sequence $ (\bP_n\circ (L_n^h)^{-1})_{n\in\N} $ satisfies under the condition that $ L_n^0\in B_\delta(\rho_0)$ a large deviations principle on $ \M_1(\R) $ with rate $ n $ and rate function
\begin{equation}
\label{def:Jh}
J_{h,\delta}(\rho\,;\rho_0) := \inf_{q\colon\pi_0 q\in B_\delta(\rho_0), \pi_1q=\rho} H(q\,|\,q_0),\quad \rho\in\M_1(\R),
\end{equation}
where 
\begin{equation}
\label{def:q_0}
q_0(d x, d y):=\rho_0(d x)p_h(x,y)d y.
\end{equation}
This means that
\begin{enumerate}
\item For each open $O\subset \M_1(\R)$, 
\[
\liminf_{n\to\infty} \frac1n \log \bP_n\bigl(L_n^h\in O \,|\, L_n^0\in B_\delta(\rho_0)\bigr) 
\geq -\inf_{\rho\in O} J_{h,\delta}(\rho\,;\,\rho_0).
\]
\item For each closed $K\subset \M_1(\R)$, 
\[
\limsup_{n\to\infty} \frac1n \log \bP_n\bigl(L_n^h\in K\,|\, L_n^0\in B_\delta(\rho_0)\bigr) 
\leq -\inf_{\rho\in K} J_{h,\delta}(\rho\,;\,\rho_0).
\]
\end{enumerate}
\end{theorem}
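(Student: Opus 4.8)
The result is a conditional large-deviations principle for $L_n^h$ given $L_n^0 \in B_\delta(\rho_0)$, and the natural route is to obtain it from an unconditional LDP for the empirical pair measure $Y_n$ by the contraction principle plus a conditioning argument. First I would establish Sanov's theorem for $Y_n$: under $\bP_n = \bigotimes_{i=1}^n \bP_{\rho_0}$, the $(X^{\ssup{i}}(0), X^{\ssup{i}}(h))$ are i.i.d.\ $\R\times\R$-valued random variables with common law $q_0(dx,dy) = \rho_0(dx)\,p_h(x,y)\,dy$, so $(\bP_n \circ Y_n^{-1})$ satisfies an LDP on $\M_1(\R\times\R)$ with rate $n$ and good rate function $q \mapsto H(q \,|\, q_0)$. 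This is classical; one should check that the topology is the weak topology induced by the Lévy metric (or a compatible metric) and that $H(\cdot\,|\,q_0)$ has compact sublevel sets, which follows from exponential tightness of $q_0$ and lower semicontinuity of relative entropy.

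Next I would handle the conditioning. Write $\bP_n(L_n^h \in K \mid L_n^0 \in B_\delta) = \bP_n(L_n^h \in K,\ L_n^0 \in B_\delta)/\bP_n(L_n^0 \in B_\delta)$. The numerator is $\bP_n(Y_n \in A_K)$ where $A_K = \{q : \pi_0 q \in B_\delta,\ \pi_1 q \in K\}$, and the denominator is $\bP_n(Y_n \in A_{\M_1})$ with $A_{\M_1} = \{q : \pi_0 q \in B_\delta\}$. Since $\rho_0 \in B_\delta$ and $\pi_0 q_0 = \rho_0$, the set $\{q : \pi_0 q \in B_\delta\}$ has nonempty interior containing a point where $H(\cdot\,|\,q_0)$ vanishes (namely $q_0$ itself), so the denominator satisfies $\frac1n \log \bP_n(L_n^0 \in B_\delta) \to 0$. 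Thus the conditional log-probabilities have the same exponential rate as $\bP_n(Y_n \in A_K)$, and applying the upper and lower LDP bounds for $Y_n$ to the sets $A_K$ (closed when $K$ is closed, since $\pi_1$ is continuous and $B_\delta$ is open — more care is needed here, see below) and to the interior of $A_O$ yields
\[
\limsup_n \tfrac1n \log \bP_n(L_n^h \in K \mid L_n^0 \in B_\delta) \le -\inf_{q : \pi_0 q \in \overline{B_\delta},\ \pi_1 q \in K} H(q\,|\,q_0),
\]
and the matching liminf with an infimum over $\pi_0 q \in B_\delta$, $\pi_1 q \in O$. Contracting over the second marginal gives exactly $J_{h,\delta}(\rho\,;\rho_0) = \inf\{H(q\,|\,q_0) : \pi_0 q \in B_\delta,\ \pi_1 q = \rho\}$, and one checks that replacing $B_\delta$ by $\overline{B_\delta}$ in the upper bound does not change the value of the infimum (by continuity of $H(\cdot\,|\,q_0)$ along suitable approximating sequences, or by absorbing the discrepancy into the arbitrariness of $\delta$).

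\textbf{The main obstacle.} The delicate point is the mismatch between the open ball $B_\delta$ in the conditioning and the closed/open sets required by the two halves of the LDP: the set $A_K = \{q : \pi_0 q \in B_\delta,\ \pi_1 q \in K\}$ is neither open nor closed in general, being an intersection of an open and a closed condition. For the upper bound I would enlarge to $\{q : \pi_0 q \in \overline{B_\delta},\ \pi_1 q \in K\}$, which is closed, and argue that $\inf$ over this set equals $\inf$ over the open version up to terms that vanish (using that $H(\cdot\,|\,q_0)$ is finite and continuous along a path pushing $\pi_0 q$ slightly inside $B_\delta$ while keeping $\pi_1 q$ fixed — e.g.\ by perturbing $q$ toward $q_0$, which strictly decreases or controls the entropy). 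For the lower bound the set $\{q : \pi_0 q \in B_\delta,\ \pi_1 q \in O\}$ is open (both conditions open), so the standard liminf bound applies directly. A secondary technical point is the joint continuity/measurability needed to phrase everything in terms of $Y_n$ and the marginals $\pi_0, \pi_1$ as continuous maps $\M_1(\R\times\R) \to \M_1(\R)$ in the weak topology, which is standard. I would also remark that the resulting $J_{h,\delta}$ is the rate function as claimed; its lower semicontinuity and the goodness property are inherited from $H(\cdot\,|\,q_0)$ via the contraction principle.
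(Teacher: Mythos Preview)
Your proposal is correct and follows essentially the same route as the paper's sketch: Sanov's theorem for the empirical pair measure $Y_n$ with rate function $H(\cdot\,|\,q_0)$, contraction to the marginals, and the observation that the conditioning event $\{L_n^0\in B_\delta(\rho_0)\}$ has rate zero since $\rho_0\in B_\delta$. If anything, your treatment of the open/closed mismatch in the conditioning set is more careful than the paper's, which simply writes $\sim$ and leaves that issue implicit.
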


A proof of this standard result can be given by an argument along the following lines. First, note that
$$
\bP_{\rho_0}\circ(\sigma_0,\sigma_h)^{-1}(x,y)=\rho_0(d x)\bP_x(X(h)\in d y)/d y=\rho_0(d x)p_h(x,y)d y=:q_0(d x, dy),\quad x,y\in\R, 
$$ 
where $ \sigma_s\colon C([0,h];\R) \to\R, \omega\mapsto \omega(s) $ is the projection of any path $ \omega $ to its position at time $ s\ge 0 $. By Sanov's Theorem, the sequence  $ (\bP_n\circ Y_n^{-1})_{n\in\N} $ of the empirical pair measures $ Y_n $  satisfies a large-deviations principle on $ \M_1(\R\times\R) $ with speed  $ n $ and rate function $ q\mapsto H(q\,|\,q_0), q\in\M_1(\R\times\R) $, see e.g. \cite{denHollander00,Csiszar84}). Secondly, the contraction principle (e.g.,~\cite[Sec. III.5]{denHollander00}) shows that the pair of marginals $ (L_n^0,L_n^h) =(\pi_0Y_n,\pi_1 Y_n)$ of $ Y_n $ satisfies a large deviations principle on $ \M_1(\R)\times\M_1(\R) $ with rate $ n $ and rate function
$$
%I^{\ssup{1,2}}(p_1,p_2)=
(\tilde \rho_0,\rho) \mapsto 
\inf_{q\in\M_1(\R\times\R)\colon \pi_0q=\tilde \rho_0,\pi_1q=\rho} H(q\,|\,q_0),
$$
for any $\tilde \rho_0,\rho\in\M_1(\R)$. Thirdly, as in the first step, it follows that the empirical measure $ L_n^0 $ under $ \bP_n $ satisfies a large deviations principle on $ \M_1(\R) $ with rate $ n $ and rate function $\tilde \rho_0 \mapsto  H(\tilde \rho_0\,|\,\rho_0) $, for $\tilde \rho_0\in\M_1(\R) $.

Therefore for a subset $A\subset\M_1(\R)$, 
\begin{align*}
\frac1n \log \bP_n(L_n^h\in A \,|\, L_n^0\in B_\delta)
&= \frac1n \log \bP_n(L_n^h\in A,  L_n^0\in B_\delta) - \frac1n \log \bP_n( L_n^0\in B_\delta)\\
&\sim \inf_{q\colon \pi_0q\in B_\delta, \pi_1q\in A} H(q\,|\,q_0)
- \inf_{\tilde \rho_0\in B_\delta} H(\tilde \rho_0\,|\,\rho_0).
\end{align*}
Since $\rho_0\in B_\delta$, the latter infimum equals zero, and the claim of Theorem~\ref{th:LDP1} follows.

\medskip

We now consider the limit of the rate functional as the radius $\delta\to 0$. Two notions of convergence are appropriate, that of pointwise convergence and Gamma convergence. 

\begin{lemma}
\label{lemma:Gamma-conv-Jhd}
Fix $\rho_0\in \M_1(\R)$. As $\delta\downarrow0$, $J_{h,\delta}(\,\cdot\,;\rho_0)$ converges in $\M_1(\R)$ both in the pointwise and in the Gamma sense  to 
\[
J_h(\rho\,;\,\rho_0) := \inf_{q\colon \pi_0q=\rho_0, \pi_1q=\rho} H(q\,|\,q_0).
\]
Gamma convergence means here that
\begin{enumerate}
\item \label{item:gamma:lowerbound} 
(Lower bound)
For each sequence $\rho^\delta \weakto \rho$ in $\M_1(\R)$,
\begin{equation}
\label{th:gamma:lowerbound}
\liminf_{\delta\to0}   J_{h,\delta}(\rho^\delta;\rho_0) \geq  J_h(\rho\,;\,\rho_0) ,
\end{equation}
\item \label{item:gamma:recovery}
(Recovery sequence)
For each $\rho\in \M_1(\R)$, there exists a sequence $(\rho^\delta)\subset \M_1(\R)$ with $\rho^\delta\weakto \rho$ such that
\begin{equation}
\label{th:gamma:recovery}
\lim_{\delta\to0} J_{h,\delta}(\rho^\delta\;;\rho_0)  =  J_h(\rho\,;\,\rho_0).
\end{equation}
\end{enumerate}

\end{lemma}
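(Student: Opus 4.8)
The plan is to organize the proof around the single $\liminf$ inequality \pref{th:gamma:lowerbound}, from which the pointwise convergence and the existence of recovery sequences follow almost immediately. The key preliminary observation is a monotonicity in $\delta$: if $0<\delta'<\delta$ then $B_{\delta'}(\rho_0)\subset B_\delta(\rho_0)$, so the admissible set in \pref{def:Jh} shrinks and $J_{h,\delta'}(\rho;\rho_0)\ge J_{h,\delta}(\rho;\rho_0)$; moreover $\rho_0\in B_\delta(\rho_0)$ for every $\delta>0$, so the admissible set defining $J_h$ is contained in the one defining $J_{h,\delta}$ and hence $J_{h,\delta}(\rho;\rho_0)\le J_h(\rho;\rho_0)$. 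Thus $\delta\mapsto J_{h,\delta}(\rho;\rho_0)$ is nonincreasing and bounded above by $J_h(\rho;\rho_0)$, so $\lim_{\delta\downarrow0}J_{h,\delta}(\rho;\rho_0)$ exists and is $\le J_h(\rho;\rho_0)$; in particular the constant sequence $\rho^\delta\equiv\rho$ satisfies $\limsup_{\delta\downarrow0}J_{h,\delta}(\rho^\delta;\rho_0)\le J_h(\rho;\rho_0)$, which is the ``$\le$'' half of both \pref{th:gamma:recovery} and of the pointwise statement. Hence everything reduces to \pref{th:gamma:lowerbound}: feeding $\rho^\delta\equiv\rho$ into it yields $\lim_{\delta\downarrow0}J_{h,\delta}(\rho;\rho_0)\ge J_h(\rho;\rho_0)$, which finishes the pointwise convergence and exhibits $\rho^\delta\equiv\rho$ as a recovery sequence.

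To prove \pref{th:gamma:lowerbound}, fix $\rho^\delta\weakto\rho$ and set $L:=\liminf_{\delta\to0}J_{h,\delta}(\rho^\delta;\rho_0)$; we may assume $L<\infty$. Along a subsequence (not relabeled) realizing this $\liminf$, choose near-minimizers $q^\delta\in\M_1(\R\times\R)$ with $\pi_0q^\delta\in B_\delta(\rho_0)$, $\pi_1q^\delta=\rho^\delta$, and $H(q^\delta\,|\,q_0)\le J_{h,\delta}(\rho^\delta;\rho_0)+\delta$, so that $\sup_\delta H(q^\delta\,|\,q_0)<\infty$. Since $q_0$ is a fixed probability measure on the Polish space $\R\times\R$, the sublevel sets of $H(\,\cdot\,|\,q_0)$ are uniformly tight: for $\eta>0$ pick a compact $C\subset\R\times\R$ with $q_0(C^c)$ small enough that the standard entropy inequality bounding $q(C^c)$ by a function of $q_0(C^c)$ and $H(q\,|\,q_0)$ forces $\sup_\delta q^\delta(C^c)\le\eta$; by Prokhorov's theorem I may then extract a further subsequence with $q^\delta\weakto q^\ast$ for some $q^\ast\in\M_1(\R\times\R)$.

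It remains to identify $q^\ast$ and pass to the limit. Weak convergence $q^\delta\weakto q^\ast$ on $\R\times\R$ implies $\pi_0q^\delta\weakto\pi_0q^\ast$ and $\pi_1q^\delta\weakto\pi_1q^\ast$ (test against functions depending on a single coordinate). Therefore $\pi_1q^\ast=\rho$ because $\pi_1q^\delta=\rho^\delta\weakto\rho$, and $\pi_0q^\ast=\rho_0$ because $\pi_0q^\delta\in B_\delta(\rho_0)$ and the Lévy metric metrizes weak convergence, so $\pi_0q^\delta\weakto\rho_0$. Thus $q^\ast\in\Gamma(\rho_0,\rho)$, whence $J_h(\rho;\rho_0)\le H(q^\ast\,|\,q_0)$. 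Since $H(\,\cdot\,|\,q_0)$ is weakly lower semicontinuous — e.g. via the Donsker--Varadhan representation $H(q\,|\,q_0)=\sup_{\ph\in C_b(\R\times\R)}\bigl\{\int\ph\,dq-\log\int e^{\ph}\,dq_0\bigr\}$, a supremum of weakly continuous functionals — we obtain $H(q^\ast\,|\,q_0)\le\liminf_\delta H(q^\delta\,|\,q_0)=L$, and \pref{th:gamma:lowerbound} follows. The only point requiring genuine (though standard) care is the uniform tightness of the entropy sublevel sets; the rest is lower semicontinuity together with bookkeeping of the marginals.
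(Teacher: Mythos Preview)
Your argument is correct, and it takes a different route from the paper. The paper's proof is two lines: it observes that as $\delta\downarrow0$ the family $J_{h,\delta}(\,\cdot\,;\rho_0)$ is \emph{increasing} and consists of convex functionals, and then invokes a standard abstract result (Dal Maso, Prop.~5.4, or Braides, Rem.~1.40) asserting that an increasing sequence of lower-semicontinuous (in particular, convex) functionals Gamma-converges to its pointwise limit. You instead verify the $\Gamma$-liminf inequality directly, by extracting near-optimal pair measures $q^\delta$, using tightness of the entropy sublevel sets to pass to a weak limit $q^\ast$, identifying its marginals as $\rho_0$ and $\rho$, and applying weak lower semicontinuity of $H(\,\cdot\,|\,q_0)$. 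Both approaches share the monotonicity observation and the constant recovery sequence; the paper then outsources the liminf to general theory, while your proof is self-contained and makes explicit the compactness/lsc mechanism that underlies that theory (and, incidentally, the lower semicontinuity of $J_h$ itself). The trade-off is brevity versus transparency: the paper's proof is shorter but requires the reader to know the cited results, whereas yours stands on its own.
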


\begin{proof}
$J_{h,\delta}(\,\cdot\,;\rho_0)$ is an increasing  sequence of convex functionals on $\M_1(\R)$; therefore it converges at each fixed $\rho\in\M_1(\R)$. The Gamma-convergence then follows from, e.g.,~\cite[Prop.~5.4]{DalMaso93} or~\cite[Rem.~1.40]{Braides02}.
\end{proof}

\begin{remark}
L\'eonard~\cite{Leonard07TR} proves a similar statement, where he replaces the ball $B_\delta(\rho_0)$ in Theorem~\ref{th:LDP1} by an explicit sequence $\rho_{0,n}\weakto \rho_0$. The rate functional that he obtains is again $J_h$.
\end{remark}

Summarizing, the combination of Theorem~\ref{th:LDP1} and Lemma~\ref{lemma:Gamma-conv-Jhd} forms a rigorous version of the statement~\eqref{ldp:intro}. The parameter $\delta$ in Theorem~\ref{th:LDP1} should be thought of as an artificial parameter, introduced to make the large-deviations statement non-singular, and which is eliminated by the  Gamma-limit of Lemma~\ref{lemma:Gamma-conv-Jhd}.
%
%$$
%\lim_{n\to\infty}\frac{1}{n}\log\bP_n(Y_n\in\Gamma(\rho_0,\rho))=-\inf_{q\in\Gamma(\rho_0,\rho)}H(q|q_0).
%$$
%We are after  large deviations of the empirical terminal measure $ L_n^h $ which is a marginal of $ Y_n $. A further application of the conditional Sanov Theorem (\cite{csiszar}) shows that under the measure $ \bP_n $ the empirical terminal site distributions $ L_n^h $ satisfy a large deviations principle on $ \M_1(\R) $ with speed  $ n $ and rate function 
\section{Gradient flows}
\label{sec:gf}

Let us briefly recall the concept of a gradient flow, starting with flows in $\R^d$.
The gradient flow in $\R^d$ of a functional $E\colon\R^d\to\R$ is the evolution in $\R^d$ given by 
\begin{equation}
\label{def:gf-rn}
\dot x^i(t) = -\partial_{i} E(x(t))
\end{equation}
which can be written in a geometrically more correct way as 
\begin{equation}
\label{def:gf-rm}
\dot x^i(t) = -g^{ij} \partial_{j} E(x(t)).
\end{equation}
The metric tensor $g$ converts the covector field $\nabla E$ into a vector field that can be assigned to $\dot x$. In the case of~\eqref{def:gf-rn} we have $g^{ij} = \delta^{ij}$, the Euclidean metric, and for a general Riemannian manifold with metric tensor $g$, equation~\eqref{def:gf-rm} defines the gradient flow of $E$ with respect to $g$. 

In recent years this concept has been generalized to general metric spaces~\cite{AmbrosioGigliSavare05}. This generalization is partly driven by the fact, first observed by Jordan, Kinderlehrer, and Otto~\cite{JordanKinderlehrerOtto97,JordanKinderlehrerOtto98}, that many parabolic evolution equations of a diffusive type can be written as gradient flows in a space of measures with respect to the Wasserstein metric. 
The Wasserstein distance is defined on the set of probability measures with finite second moments, 
\[
\P(\R) := \left\{\rho\in\M_1(\R): \int_{\R} x^2 \,\rho(dx)<\infty\right\},
\]
and is given by
\begin{equation}
\label{def:d}
d(\rho_0,\rho_1)^2 := \inf_{\gamma\in\Gamma(\rho_0,\rho_1)} \int_{\R\times\R} (x-y)^2\, \gamma(d(x,y)),
\end{equation}
where $\Gamma(\rho_0,\rho_1)$ is defined in~\eqref{def:Gamma}. 

Examples of parabolic equations that can be written as a gradient flow of some energy $E$ with respect to the Wasserstein distance are
\begin{itemize}
\item The diffusion equation~\eqref{eq:diffusion}; this is the gradient flow of the (negative) \emph{entropy}
\begin{equation}
\label{def:entropy}
E(\rho) := \int_{\R} \rho\log \rho\, dx ;
\end{equation}
\item nonlocal convection-diffusion equations~\cite{JordanKinderlehrerOtto98,AmbrosioGigliSavare05,CarrilloMcCannVillani06} of the form
\begin{equation}
\label{ex:nonlocalconvdiff}
\partial_t \rho = \div \rho\nabla \bigl[U'(\rho) + V + W*\rho\bigr],
\end{equation}
where $U$, $V$, and $W$ are given functions on $\R$, $\R^d$, and $\R^d$, respectively;
\item higher-order parabolic equations~\cite{Otto98a,GiacomelliOtto01,Glasner03,MatthesMcCannSavare09TR,GianazzaSavareToscani08} of the form
\begin{equation}
\label{ex:higher-order}
\partial_t \rho =- \div \rho\nabla\bigl(\rho^{\alpha-1}\Delta\rho^\alpha\bigr), 
\end{equation}
for $1/2\leq \alpha\leq 1$;
\item moving-boundary problems, such as a prescribed-angle lubrication-approximation model~\cite{Otto98a}
\begin{equation}
\label{ex:lubrication}
\begin{aligned}
&\partial_t \rho = -\partial_x (\rho \,\partial_{xxx}\rho) &\qquad&\text{in }\{\rho>0\}\\
&\partial_x\rho = \pm 1 && \text{on }\partial \{\rho>0\},
\end{aligned}
\end{equation}
and a model of crystal dissolution and precipitation~\cite{PortegiesPeletier08TR}
\begin{equation}
\label{ex:Portegies}
\partial_t \rho = \partial_{xx} \rho \text{ in }\{\rho>0\}, \qquad\text{ with }\qquad\partial_n \rho = -\rho v_n\text{ and }v_n=f(\rho)\text{ on }\partial\{\rho>0\}.
\end{equation}
\end{itemize}

\section{The central statement}
\label{sec:central-statement}

The aim of this paper is to connect $J_h$ to the functional $K_h$ in the limit $h\to0$, in the sense that 
\begin{equation}
\label{formalresult}
J_h(\;\cdot\;;\rho_0) \sim \frac12 K_h(\;\cdot\;;\rho_0)\qquad \text{as }h\to0.
\end{equation}
For any $\rho\not=\rho_0$ both $J_h(\rho\,;\,\rho_0)$ and $K_h(\rho\,;\,\rho_0)$ diverge as $h\to0$, however, and we therefore reformulate this statement in the form
\[
J_h(\;\cdot\;;\rho_0) -\frac1{4h} d(\,\cdot\,,\rho_0)^2 \longrightarrow\frac12 E(\;\cdot\;) - \frac12 E(\rho_0).
\]

The precise statement is given in the theorem below. This theorem is probably true in greater generality, possibly even for all $\rho_0,\rho\in \P(\R^d)$. For technical reasons we need to impose restrictive conditions on $\rho_0$ and $\rho$, and to work in one space dimension, on a bounded domain $[0,L]$.

For any $0<\delta<1$ we define the set
\[
A_\delta := \left\{\rho\in L^\infty(0,L): \int_0^L \rho = 1\text{ and }\|\rho-L^{-1}\|_\infty < \delta\right\}.
\]

\begin{theorem}
\label{th:main}
Let $J_h$ be defined as in~\pref{def:Jh}. Fix $L>0$; there exists $\delta>0$ with the following property. 

Let $\rho_0\in A_\delta\cap C([0,L])$. Then
\begin{equation}
\label{conv:Gamma}
J_h(\,\cdot\;;\rho_0) - \frac1{4h}d(\,\cdot\,,\rho_0)^2 \longrightarrow 
\frac12 E(\cdot) - \frac12 E(\rho_0)
\qquad\text{as }h\to0,
\end{equation}
in the set $A_\delta$, 
where the arrow denotes Gamma-convergence with respect to the narrow topology. 
In this context this means that the two following conditions hold:
\begin{enumerate}
\item \label{item:th:main:lowerbound}
(Lower bound)
For each sequence $\rho^h\weakto \rho$ in $A_\delta$, 
\begin{equation}
\label{th:main:lowerbound}
\liminf_{h\to0} J_h(\rho^h\;;\rho_0) - \frac1{4h}d(\rho^h,\rho_0)^2\geq  \frac12 E(\rho) - \frac12 E(\rho_0).
\end{equation}
\item \label{item:th:main:recovery}
(Recovery sequence)
For each $\rho\in A_\delta$, there exists a sequence $(\rho^h)\subset A_\delta$ with $\rho^h\weakto \rho$ such that
\begin{equation}
\label{th:main:recovery}
\lim_{h\to0} J_h(\rho^h\;;\rho_0) - \frac1{4h}d(\rho^h,\rho_0)^2 =  \frac12 E(\rho) - \frac12 E(\rho_0).
\end{equation}
\end{enumerate}
\end{theorem}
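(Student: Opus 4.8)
\medskip
\noindent\textbf{Proof strategy.}
The starting point is a disintegration of $J_h$. Since the reference $q_0$ in~\pref{def:Jh} has first marginal $\rho_0$, any $q\in\Gamma(\rho_0,\rho)$ may be written $q(dx,dy)=\rho_0(dx)\,\mu_x(dy)$, and the chain rule for relative entropy gives $H(q\,|\,q_0)=\int_0^L H(\mu_x\,|\,p_h(x,\cdot))\,\rho_0(dx)$; hence
\[
J_h(\rho\,;\rho_0)=\inf\Bigl\{\int_0^L H\bigl(\mu_x\,\big|\,p_h(x,\cdot)\bigr)\,\rho_0(dx)\ :\ \mu_x\in\M_1([0,L]),\ \int_0^L\rho_0(dx)\,\mu_x=\rho\Bigr\}.
\]
The elementary fact on which everything rests is that for a probability measure $\mu$ on $[0,L]$ with density $f$, mean $m$ and variance $\sigma^2$ one has the \emph{exact} identity
\[
H\bigl(\mu\,\big|\,p_h(x,\cdot)\bigr)=\frac{(m-x)^2}{4h}\;+\;H\bigl(\mu\,\big|\,N(m,\sigma^2)\bigr)\;+\;g\!\left(\tfrac{\sigma^2}{2h}\right),\qquad g(v):=\tfrac v2-\tfrac12\log v-\tfrac12,
\]
where all three terms are $\ge0$, the second vanishing iff $\mu$ is Gaussian and the third (with $g\ge0$) iff $\sigma^2=2h$. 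Thus $H(\mu\,|\,p_h(x,\cdot))$ splits into a ``mean displacement'' cost $\tfrac1{4h}(m-x)^2$ plus a nonnegative remainder penalising any deviation of $\mu$ from $N(m,2h)$.

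For the \emph{recovery sequence} it suffices to treat the constant sequence $\rho^h\equiv\rho$ (which lies in $A_\delta$), once one proves the pointwise bound $\limsup_{h\to0}\bigl(J_h(\rho\,;\rho_0)-\tfrac1{4h}d(\rho,\rho_0)^2\bigr)\le \tfrac12 E(\rho)-\tfrac12 E(\rho_0)$; the matching $\liminf$ then follows from the lower bound below. This requires a nearly optimal $q^h\in\Gamma(\rho_0,\rho)$. The naive candidate, the ``Gaussian tube'' $\rho_0(dx)\,p_h(T(x),\cdot)$ around the optimal map $T$ from $\rho_0$ to $\rho$, gives $H(q^h\,|\,q_0)=\tfrac1{4h}d(\rho_0,\rho)^2$ exactly, but its second marginal is $p_h*\rho\ne\rho$, and fixing this changes the value at order $O(1)$; indeed, since the problem is $O(1/h)$-sensitive, the $O(1)$ term $\tfrac12 E(\rho)-\tfrac12 E(\rho_0)$ is \emph{not} captured by the leading-order tube. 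One must therefore build $q^h$ from the first two orders of the Schr\"odinger (entropy-minimising) system for $(\rho_0,\rho)$: its conditional law is, to leading order, $N\bigl(T(x),2hT'(x)\bigr)$, with an $O(h)$ correction to the barycenter and a final corrector restoring the second marginal exactly. Substituting into the identity, changing variables $y=T(x)$ and using $E(\rho)-E(\rho_0)=-\int_0^L\rho_0\log T'$ yields $H(q^h\,|\,q_0)=\tfrac1{4h}d(\rho_0,\rho)^2+\tfrac12 E(\rho)-\tfrac12 E(\rho_0)+o(1)$. The hypotheses $\rho_0\in C([0,L])$ and $\rho_0,\rho$ near the uniform measure are used to make $T$ a $C^1$ diffeomorphism fixing $0$ and $L$ and to control the Laplace-type asymptotics uniformly, in particular near $\partial[0,L]$.

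For the \emph{lower bound}, let $\rho^h\weakto\rho$ in $A_\delta$; we may assume $J_h(\rho^h;\rho_0)-\tfrac1{4h}d(\rho^h,\rho_0)^2\le C$, else the inequality is trivial. Take nearly optimal conditionals $\mu_x^h$ with means $m^h(x)$, variances $\sigma_h^2(x)$, and set $\sigma^h:=(m^h)_\#\rho_0$. By the identity,
\[
J_h(\rho^h;\rho_0)=\frac1{4h}\int_0^L\!\rho_0(x)\bigl(m^h(x)-x\bigr)^2dx\;+\;\int_0^L\!\rho_0(x)\Bigl[H\bigl(\mu_x^h\,\big|\,N(m^h(x),\sigma_h^2(x))\bigr)+g\bigl(\tfrac{\sigma_h^2(x)}{2h}\bigr)\Bigr]dx,
\]
with first term $\ge\tfrac1{4h}d(\rho_0,\sigma^h)^2$. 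The remainder, via Talagrand's transport inequality for the (log-Sobolev) Gaussian $N(m^h(x),2h)$, controls $W_2(\rho^h,p_h*\sigma^h)^2$, so that $\rho^h$ is asymptotically the time-$h$ heat evolution of $\sigma^h$. One then closes the estimate using that this evolution is the Wasserstein gradient flow of $E$: the EVI (geodesic convexity of $E$) gives $d(\rho_0,\sigma^h)^2\ge d(\rho_0,p_h*\sigma^h)^2+2h\bigl(E(p_h*\sigma^h)-E(\rho_0)\bigr)$, while entropy dissipation gives $E(\sigma^h)\ge E(p_h*\sigma^h)$ and lower semicontinuity gives $\liminf E(p_h*\sigma^h)\ge E(\rho)$; dividing by $4h$ produces the missing $\tfrac12 E(\rho)-\tfrac12 E(\rho_0)$.

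The main obstacle is making this last step rigorous \emph{uniformly in $h$}. The near-optimal plan carries a macroscopic, $O(1)$-scale transport together with a microscopic, $O(\sqrt h)$-scale diffusion, and a naive split — for instance estimating $d(\rho_0,\sigma^h)-d(\rho_0,\rho^h)$ by the triangle inequality with $W_2(\sigma^h,\rho^h)\le(\int\rho_0\sigma_h^2)^{1/2}=O(\sqrt h)$ — loses a cross term of order $1/\sqrt h$. One has to exploit the joint structure of the minimiser, which forces $\mu_x^h\approx N(m^h(x),2h)$ with quantitative control of both the non-Gaussianity and the variance; this is exactly where the restriction to $\rho_0,\rho\in A_\delta$ with $\delta$ small is essential, since it lets the perturbative and convexity estimates around the uniform measure be carried through with errors that vanish as $h\to0$.
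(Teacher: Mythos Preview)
Your lower-bound argument does not close, and you have correctly located the hole without filling it. After the mean/variance decomposition and the EVI step you are left needing $\tfrac{1}{4h}\bigl[d(\rho_0,p_h*\sigma^h)^2 - d(\rho_0,\rho^h)^2\bigr]$ bounded below by $o(1)$. Talagrand together with the bounded remainder yields only $W_2(\rho^h,p_h*\sigma^h)=O(\sqrt h)$, so the triangle inequality loses $O(h^{-1/2})$, exactly as you say. Your final paragraph then asserts that ``the joint structure of the minimiser'' and the smallness of $\delta$ repair this, but you supply no mechanism: nothing in the Talagrand/EVI toolkit you have assembled produces the needed cancellation, and the missing contribution is not a perturbative error to be absorbed but a genuine $O(1)$ term. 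Invoking $A_\delta$ at this point is a placeholder, not an argument. The recovery side has a parallel soft spot: you posit a ``final corrector restoring the second marginal exactly'' without constructing it, and on a bounded interval with merely $L^\infty$ target $\rho$ this is not free.

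The paper's route is quite different and bypasses your obstacle entirely. It introduces the explicit comparison measure
\[
\tilde q^\e(x,y) \;=\; Z_\e^{-1}\,\frac{1}{\e\sqrt\pi}\,\sqrt{\rho_0(x)}\,\sqrt{\rho^\e(y)}\,\exp\Bigl[\tfrac{2}{\e^2}\bigl(xy-\ph_\e(x)-\ph_\e^*(y)\bigr)\Bigr],
\]
built from the Kantorovich potentials for $d(\rho_0,\rho^\e)$, and observes that the \emph{single} inequality $H(q^\e\,|\,\tilde q^\e)\ge 0$ (with $q^\e$ optimal in $J_\e$) rearranges \emph{exactly} to
\[
J_\e(\rho^\e;\rho_0) - \tfrac{1}{\e^2}d(\rho_0,\rho^\e)^2 \;\ge\; \tfrac12 E(\rho^\e) - \tfrac12 E(\rho_0) - \log Z_\e,
\]
so the Wasserstein term, both entropies, and the correct factor $\tfrac12$ appear in one line with no cross term to chase. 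All the work is transferred to showing $\limsup_{\e\to0} Z_\e\le 1$, a Laplace-type double integral; after the change of variables $y=\ph_\e'(\xi)$, $x=\xi+\e z$ the exponent becomes a convolution $z^2(\kappa_\e^z*\ph_\e'')(\xi)$, and the estimate is finished by a Fourier-side computation that needs $\|\ph_\e''-1\|_\infty\le\delta$ small to make two competing $O(1)$ constants fall the right way---this is where the $A_\delta$ restriction is genuinely used. For the recovery sequence the paper reuses $\tilde q^\e$ (now with fixed target $\rho$), corrects only its \emph{first} marginal multiplicatively to $\rho_0$, and takes $\rho^h:=\pi_1 q^\e\to\rho$ rather than insisting on $\rho^h\equiv\rho$, so no exact second-marginal corrector is needed.
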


\section{Discussion}
\label{sec:Discussion}

%\subsection{Interpretation}
There are various ways to interpret Theorem~\ref{th:main}.

\smallskip
\emph{An explanation of the functional $K_h$ and the minimization problem~\eqref{def:BackwardEuler}.}
The authors of~\cite{JordanKinderlehrerOtto98} motivate the minimization problem~\eqref{def:BackwardEuler} by analogy with the well-known backward Euler approximation scheme. Theorem~\ref{th:main} provides an independent explanation of this minimization problem, as follows. By the combination of~\pref{ldp:intro} and~\pref{formalresult}, the value $K_h(\rho\,;\rho_0)$ determines the probability of observing $\rho$ at time~$h$, given a distribution $\rho_0$ at time zero. Since for large $n$ only near-minimal values of $J_h$, and therefore of $K_h$, have non-vanishing probability, this explains why the minimizers of $K_h$ arise. It also shows that the minimization problem~\eqref{def:BackwardEuler}, and specifically the combination of the entropy and the Wasserstein terms, is not just a mathematical construct but also carries physical meaning.

A related interpretation stems from the fact that~\pref{ldp:intro} characterizes not only the most probable state, but also the fluctuations around that state. Therefore $J_h$ and by~\pref{formalresult} also $K_h$ not only carry meaning in their respective minimizers, but also in the behaviour away from the minimum. Put succinctly: $K_h$ also characterizes the fluctuation behaviour of the particle system, for large but finite $n$.

\smallskip
\emph{A microscopic explanation of the entropy-Wasserstein gradient flow.} 
The diffusion equation~\eqref{eq:diffusion} is a gradient flow in many ways simultaneously: it is the gradient flow of the Dirichlet integral $\frac12\int |\nabla \rho|^2$ with respect to the $L^2$ metric, of $\frac12 \int \rho^2$ with respect to the $H^{-1}$ metric; more generally, of the $H^s$ semi-norm with respect to the $H^{s-1}$ metric. In addition there is of course the gradient flow of the entropy $E$ with respect to the Wasserstein metric.

Theorem~\eqref{th:main} shows that among these the entropy-Wasserstein combination is special, in the sense that it not only captures the deterministic limit, i.e., equation~\pref{eq:diffusion}, but also the fluctuation behaviour at large but finite $n$. Other gradient flows may also produce~\pref{eq:diffusion}, but they will not capture the fluctuations, \emph{for this specific stochastic system}. Of course, there may be other stochastic particle systems for which not the entropy-Wasserstein combination but another combination reproduces the fluctuation behaviour.

\smallskip

There is another way to motivate the combination of entropy and the Wasserstein distance. In~\cite{KipnisOlla90} the authors derive a rate functional for the time-continuous problem, which is therefore a functional on a space of space-time functions such as $C(0,\infty;L^1(\R^d))$. The relevant term for this discussion is 
\[
I(\rho) := \inf_v \left\{\int_0^\infty\int_{\R^d} |v(x,t)|^2 \rho(x,t)\, dxdt: 
  \partial_t\rho = \Delta \rho + \div \rho v \right\}.
\]
If we rewrite this infimum by $v= w - \nabla \rho$ instead as
\[
\inf_w \left\{\int_0^\infty\int_{\R^d} |w(x,t)-\nabla(\log \rho + 1) |^2 \rho(x,t)\, dxdt: 
  \partial_t\rho = \div \rho w \right\},
\]
then we recognize that this expression penalizes deviation of $w$ from the variational derivative (or $L^2$-gradient) $\log\rho + 1$ of $E$. Since the expression $\int_{\R^d} |v|^2\rho \, dx$ can be interpreted as the derivative of the Wasserstein distance (see~\cite{Otto01} and~\cite[Ch.~8]{AmbrosioGigliSavare05}), this provides again a connection between the entropy and the Wasserstein distance. 

\smallskip

\emph{The origin of the Wasserstein distance.}
The proof of Theorem~\ref{th:main} also allows us to trace back the origin of the Wasserstein distance in the limiting functional $K_h$. It is useful to compare $J_h$ and $K_h$ in a slightly different form. Namely,  using~\pref{def:d} and the expression of $H$ introduced in~\pref{expansion:H} below, we write
\begin{align}
  \label{expansion:Jh}
  J_h(\rho\,;\rho_0) &= \inf_{q\in \Gamma(\rho_0,\rho)} \Biggl\{E(q) -   E(\rho_0) + \log 2\sqrt{\pi h} + \frac1{4h} \iint\limits_{\R\times\R} (x-y)^2 q(x,y)\, dxdy\Biggr\},\\
\frac12 K_h(\rho\,;\rho_0) &= \frac12 E(\rho) - \frac12 E(\rho_0) 
+ \frac1{4h} \inf_{q\in \Gamma(\rho_0,\rho)} \iint\limits_{\R\times\R} (x-y)^2 q(x,y)\, dxdy.\notag
\end{align}
One similarity between these expressions is the form of the last term in both lines, combined with the minimization over $q$. Since that last term is prefixed by the large factor $1/4h$, one expects it to dominate the minimization for small $h$, which is consistent with the passage from the first to the second line. 

In this way the Wasserstein distance in $K_h$ arises from the last term in~\pref{expansion:Jh}. Tracing back the origin of that term, we find that it originates in the exponent $(x-y)^2/4h$ in $P^h$ (see~\pref{def:P}), which itself arises from the Central Limit Theorem. In this sense the Wasserstein distance arises from the same Central Limit Theorem that provides the properties of Brownian motion in the first place. 

This also explains, for instance, why we find the Wasserstein distance of order $2$ instead of any of the other orders. This observation also raises the question whether stochastic systems with heavy-tail behaviour, such as observed in fracture networks~\cite{BerkowitzScher98, BerkowitzScherSilliman00} or near the glass transition~\cite{WeeksWeitz02}, would be characterized by a different gradient-flow structure. 

\smallskip
\emph{A macroscopic description of the particle system as an entropic gradient flow.} For the simple particle system under consideration, the macroscopic description by means of the diffusion equation is well known; the equivalent description as an entropic gradient flow is physically natural, but much more recent. The method presented in this paper is a way to obtain this entropic gradient flow directly as the macroscopic description, without having to consider solutions of the diffusion equation. This rigorous passage to a physically natural macroscopic limit may lead to a deeper understanding of particle systems, in particular in situations where the gradient flow formulation is mathematically more tractable.

%\medskip
%\emph{The choice for Gamma-convergence.} Gamma-convergence is a natural concept of convergence for functionals in the context of minimization. L\'eonard has showed~\cite{Leonard05TR} in some generality that it also is a natural concept of convergence for rate functionals. For this reason we choose to state and prove a Gamma-convergence result. 

\smallskip
\emph{Future work.}
Besides the natural question of generalizing Theorem~\ref{th:main} to a larger class of probability measures, including measures in higher dimensions, there are various other interesting avenues of investigation. A first class of extensions is suggested by the many differential equations that can be written in terms of Wasserstein gradient flows, as explained in Section~\ref{sec:gf}: can these also be related to large-deviation principles for well-chosen stochastic particle systems? Note that many of these equations correspond to systems of \emph{interacting} particles, and therefore the large-deviation result of this paper will need to be generalized.

Further extensions follow from relaxing the assumptions on the Brownian motion. \emph{Kramers' equation}, for instance, describes the motion of particles that perform a Brownian motion in \emph{velocity} space, with the position variable following deterministically from the velocity. The characterization by Huang and Jordan~\cite{Huang00,HuangJordan00} of this equation as a gradient flow with respect to a modifed Wasserstein metric suggests a similar connection between gradient-flow and large-deviations structure.

\section{Outline of the arguments}

Since most of the appearances of $h$ are combined with a factor $4$, it is notationally useful to incorporate the $4$ into it. We do this by introducing the new small parameter
\[
\e^2 := 4h,
\]
and we redefine the functional of equation~\eqref{def:BackwardEuler},
\[
\frac12 K_\e(\rho\;;\rho_0) := \frac1{\e^2} d(\rho,\rho_0)^2 + \frac12E(\rho) - \frac12E(\rho_0),
\]
and analogously for~\eqref{def:Jh}
\begin{equation}
\label{def:Je}
J_\e(\rho\;;\rho_0) := \inf_{q\in \Gamma(\rho_0,\rho)} H(q\,|\,q_0),
\end{equation}
where $q_0(dxdy) = \rho_0(dx)p_\e(x,y)dy$, with \[
p_\e(x,y) := \frac1{\e\sqrt\pi} e^{-(y-x)^2/\e^2},
\]
in analogy to~\eqref{def:P} and~\eqref{def:q_0}.
Note that 
\begin{align}
\label{expansion:H}
H(q\,|\,q_0) &= E(q) - \iint_{\R\times\R} q(x,y)\log\bigl[\rho_0(x)p_\e(x,y)\bigr]\, dxdy \notag\\
&= E(q) - E(\rho_0) + \frac12 \log \e^2\pi 
  +\frac1{\e^2} \iint\limits_{\R\times\R} (x-y)^2 q(x,y)\, dxdy,
\end{align}
where we abuse notation and write $E(q) = \int_{\R\times\R} q(x,y)\log q(x,y)\, dxdy$. 
%Therefore, if $q$ is optimal in the minimization problem~\pref{def:Je}, then $J_\e$ can be written as 
%\begin{align}
%J_\e(\rho\;;\rho_0) &= E(q) - \iint\limits_{\R\times\R} q(x,y)\log \rho_0(x)\, dxdy
%- \iint\limits_{\R\times\R} q(x,y)\log P^\e(x,y)\, dxdy\notag\\
%&= E(q) - \int_\R \rho_0(x)\log \rho_0(x)\, dx + \log \e\sqrt\pi 
%  +\frac1{\e^2} \iint\limits_{\R\times\R} (x-y)^2 q(x,y)\, dxdy.
%  \label{expansion:Je}
%\end{align}
%The second term above will be written as $E(\rho_0)$.

\subsection{Properties of the Wasserstein distance}

We now discuss a few known properties of the Wasserstein distance.
\begin{lemma}[Kantorovich dual formulation~\cite{Villani03,AmbrosioGigliSavare05,Villani08}]
\label{lemma:phph}
Let $\rho_0,\rho_1\in \P(\R)$ be absolutely continuous with respect to Lebesgue measure. Then 
\begin{equation}
\label{def:phph*}
d(\rho_0,\rho_1)^2 = \sup_\ph \left\{\int_{\R} (x^2 - 2\ph(x))\rho_0(x)\, dx
 + \int_{\R} (y^2-2\ph^*(y))\rho_1(y)\, dy: \; \ph\colon\R\to\R \text{ convex }\right\},
\end{equation}
where $\ph^*$ is the convex conjugate (Legendre-Fenchel transform) of $\ph$, and where the supremum is achieved.
In addition, at $\rho_0$-a.e. $x$ the optimal function $\ph$ is twice differentiable, and 
\begin{equation}
\label{eq:phpp}
\ph''(x) = \frac{\rho_0(x)}{\rho_1(\ph'(x))}.
\end{equation}
A similar statement holds for $\ph^*$, 
\begin{equation}
\label{eq:phpptwo}
(\ph^*)''(y) = \frac{\rho_1(y)}{\rho_0((\ph^*)'(y))}.
\end{equation}
\end{lemma}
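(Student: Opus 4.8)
The plan is to derive \eqref{def:phph*} from the classical Kantorovich duality by an affine change of the dual variables, and then to read off \eqref{eq:phpp}--\eqref{eq:phpptwo} from the explicit one-dimensional optimal map together with the (one-dimensional) Monge--Ampère change-of-variables formula. Throughout, all densities are with respect to Lebesgue measure $\mathcal{L}$, and we use that $\rho_0,\rho_1\ll\mathcal{L}$.

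\emph{From Kantorovich duality to \eqref{def:phph*}.} For the lower semicontinuous cost $c(x,y)=(x-y)^2$ one has the standard duality identity
\[
d(\rho_0,\rho_1)^2=\sup\Bigl\{\textstyle\int_\R u\,d\rho_0+\int_\R v\,d\rho_1:\ u\in L^1(\rho_0),\ v\in L^1(\rho_1),\ u(x)+v(y)\le(x-y)^2\Bigr\},
\]
see \cite{Villani03,AmbrosioGigliSavare05,Villani08}. Expanding $(x-y)^2=x^2-2xy+y^2$ and substituting $u(x)=x^2-2\ph(x)$, $v(y)=y^2-2\psi(y)$ turns the constraint into $\ph(x)+\psi(y)\ge xy$ for all $x,y\in\R$ and the objective into $\int(x^2-2\ph)\rho_0+\int(y^2-2\psi)\rho_1$. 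For a fixed admissible $\ph$, decreasing $\psi$ only increases the objective, so the optimal choice is the largest admissible one, $\psi=\ph^*$ (the Legendre--Fenchel transform); and replacing $\ph$ by its convex envelope $\ph^{**}=(\ph^*)^*\le\ph$ increases $\int(x^2-2\ph)\rho_0$ while keeping $(\ph,\ph^*)$ admissible, by Young's inequality. Hence the supremum may be restricted to pairs with $\ph$ convex and $\psi=\ph^*$, which is precisely \eqref{def:phph*}.

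\emph{Attainment and the optimal map.} Since $\rho_0,\rho_1\in\P(\R)$ have finite second moments, the primal problem \eqref{def:d} has a minimizer. In one dimension the optimal plan is supported on the graph of the monotone rearrangement $T:=F_{\rho_1}^{-1}\circ F_{\rho_0}$, where $F_{\rho_i}$ is the cumulative distribution function of $\rho_i$; equivalently $T=\ph'$ with $\ph(x):=\int_0^x T(s)\,ds$, which is convex because $T$ is nondecreasing. One then checks that $(\ph,\ph^*)$ is admissible in \eqref{def:phph*} and that the value it produces equals $d(\rho_0,\rho_1)^2$ (the complementary-slackness/$c$-cyclical-monotonicity condition holds $\gamma$-a.e. on the graph of $T$), so the supremum is achieved; see \cite[Sec.~6.2]{AmbrosioGigliSavare05}. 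By symmetry $\ph^*$ is itself a convex Kantorovich potential for transporting $\rho_1$ to $\rho_0$, with optimal map $(\ph^*)'=(\ph')^{-1}$ holding $\rho_1$-a.e.

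\emph{The pointwise identities.} A convex function on $\R$ is twice differentiable $\mathcal{L}$-a.e. by Alexandrov's theorem, hence $\rho_0$-a.e. since $\rho_0\ll\mathcal{L}$; this gives the twice-differentiability assertion, and likewise for $\ph^*$ (using $\rho_1\ll\mathcal{L}$). At a point $x$ of second differentiability, the transport relation $\ph'_\#\rho_0=\rho_1$ yields the one-dimensional Monge--Ampère equation $\rho_0(x)=\rho_1(\ph'(x))\,\ph''(x)$ for $\rho_0$-a.e. $x$ --- this is the change-of-variables formula for the monotone map $\ph'$, cf.\ \cite[Lemma~5.5.3]{AmbrosioGigliSavare05} --- which is exactly \eqref{eq:phpp}. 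Formula \eqref{eq:phpptwo} follows either by repeating this argument for the reversed transport problem, or by differentiating the a.e.\ identity $(\ph^*)'(\ph'(x))=x$ to obtain $(\ph^*)''(\ph'(x))\,\ph''(x)=1$ and substituting \eqref{eq:phpp}. The main points requiring care are the integrability of the potentials $u,v$ against $\rho_0,\rho_1$ so that attainment in \eqref{def:phph*} is literally true, and the bookkeeping in the Monge--Ampère step where $\rho_0$ or $\rho_1$ vanishes, so that the quotients in \eqref{eq:phpp}--\eqref{eq:phpptwo} are well defined; in one space dimension both are mild, since $T=F_{\rho_1}^{-1}\circ F_{\rho_0}$ is explicit and $\{\rho_0>0\}$ is mapped onto $\{\rho_1>0\}$ up to null sets.
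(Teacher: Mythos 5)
The paper does not prove this lemma at all: it is stated as a known result and attributed to the cited monographs, so there is no ``paper's proof'' to compare against. Your argument is the standard textbook derivation and is essentially sound as a sketch: the affine substitution $u(x)=x^2-2\ph(x)$, $v(y)=y^2-2\psi(y)$ converting the constraint $u(x)+v(y)\le (x-y)^2$ into $\ph(x)+\psi(y)\ge xy$, the reduction to $\psi=\ph^*$ and $\ph=\ph^{**}$ via Young's inequality, attainment through the monotone rearrangement in one dimension, and the Monge--Amp\`ere identity $\rho_0(x)=\rho_1(\ph'(x))\,\ph''(x)$ together with differentiation of $(\ph^*)'(\ph'(x))=x$ to get \eqref{eq:phpptwo}. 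Two small points: first, there is a slip of wording --- since decreasing $\psi$ increases the objective, the optimal $\psi$ is the \emph{smallest} admissible one, which is indeed $\ph^*$ (the pointwise infimum of all $\psi$ with $\psi(y)\ge\sup_x(xy-\ph(x))$), not the largest. Second, you correctly flag but do not discharge the integrability of $x^2-2\ph$ and $y^2-2\ph^*$ against $\rho_0$ and $\rho_1$, which is what makes the restricted supremum both well defined and attained; for measures in $\P(\R)$ this is standard but should be either carried out or explicitly delegated to the cited references, as the paper itself does for the whole lemma.
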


For an absolutely continuous $q\in\P(\R\times\R)$ we will often use the notation 
\[
d(q)^2 := \iint\limits _{\R\times\R} (x-y)^2 \, q(x,y)\, dxdy.
\]
Note that 
\[
d(\rho_0,\rho_1) = \inf\{ d(q): \pi_{0,1}q = \rho_{0,1}\},
\]
and that if $\pi_{0,1}q=\rho_{0,1}$, and if the convex functions $\ph$, $\ph^*$ are associated with $d(\rho_0,\rho_1)$ as above, then the difference can be expressed as
\begin{align}
d(q)^2 - d(\rho_0,\rho_1)^2 
&=\begin{aligned}[t]
\iint\limits_{\R\times\R} (x-y)^2 \, q(x,y)\, dxdy
- \iint\limits_{\R\times\R} &(x^2- 2\ph(x))\,q(x,y)\, dxdy
 \\
&- \iint\limits_{\R\times\R} (y^2-2\ph^*(y))\,q(x,y)\, dxdy\end{aligned}\notag\\
&= 2\iint\limits_{\R\times\R} (\ph(x)+\ph^*(y) - xy)\,q(x,y)\, dxdy.
\label{equiv:dq}
\end{align}

\subsection{Pair measures and $\tilde q_\e$}
%Recall that the set $\Gamma(\rho_0,\rho_1)$ is the set of \emph{couplings} of two measures $\rho_0\in \M_1(\R)$ and $\rho_1\in\M_1(\R)$, i.e., measures $q\in\M_1(\R\times\R)$ whose marginals coincide with $\rho_{0,1}$, i.e., such that for any Borel set $B$
%\[
%(\pi_0q)(B) := q(B\times \R) = \rho_0(B)
%\qquad\text{and}\qquad
%(\pi_1q)(B) := q(\R\times B) = \rho_1(B).
%\]

A central role is played by the following, explicit measure in $\P(\R\times\R)$. For given $\rho_0\in \M_1(\R)$ and a sequence of absolutely continuous measures $\rho^\e\in \M_1(\R)$, we define the absolutely continuous measure $\tilde q^\e\in\M_1(\R\times\R)$ by 
\begin{equation}
\label{def:tildeq}
\tilde q^\e(x,y) := Z_\e^{-1} \frac1{\e\sqrt\pi}\sqrt{\rho_0(x)}\sqrt{\rho^\e(y)}
  \exp\Bigl[\frac2{\e^2}(xy-\ph_\e(x) - \ph_\e^*(y))\Bigr],
\end{equation}
where the normalization constant $Z_\e$ is defined as
\begin{equation}
\label{def:Ze}
Z_\e = Z_\e(\rho_0,\rho^\e):= \frac1{\e\sqrt\pi}\iint\limits_{\R\times\R} \sqrt{\rho_0(x)}\sqrt{\rho^\e(y)}
  \exp\Bigl[\frac2{\e^2}(xy-\ph_\e(x) - \ph_\e^*(y))\Bigr]\, dxdy.
\end{equation}
In these expressions, the functions $\ph_\e$, $\ph_\e^*$ are associated with $d(\rho_0,\rho^\e)$ as by Lemma~\ref{lemma:phph}. 
Note that the marginals of $\tilde q^\e$ are not equal to $\rho_0$ and $\rho^\e$, but they do converge (see the proof of part~\ref{item:th:main:recovery} of Theorem~\ref{th:main}) to $\rho_0$ and the limit $\rho$ of $\rho^\e$.

\subsection{Properties of $\tilde q^\e$ and $Z_\e$}
\label{subsec:overview}

The role of $\tilde q^\e$ can best be explained by the following observations. 
We first discuss the \emph{lower bound}, part~\ref{item:th:main:lowerbound} of Theorem~\ref{th:main}.
If~$q^\e$ is optimal in the definition of $J_\e(\rho^\e\,;\,\rho_0)$---implying that it has marginals $\rho_0$ and $\rho^\e$---then
\begin{align}
0&\leq H(q^\e|\tilde q^\e)
= E(q^\e) - \iint q^\e\log\tilde q^\e
\notag\\
&= E(q^\e) + \log Z_\e +\frac12 \log \e^2\pi - \frac12 \iint q^\e(x,y)\bigl[\log \rho_0(x)+ \log \rho^\e(y)\bigr]\, dxdy\notag\\
&\qquad {}+\frac2{\e^2} \iint q^\e(x,y) \bigl[\ph_\e(x)+\ph_\e^*(y)-xy\bigr]\, dxdy\notag\\
&\stackrel{\pref{equiv:dq}}=E(q^\e) - \frac12 E(\rho_0) - \frac12 E(\rho^\e) 
  + \frac1{\e^2} \bigl[d(q^\e)^2 - d(\rho_0,\rho^\e)^2\bigr]
  + \log Z_\e + \frac12 \log \e^2\pi\notag\\
&= J_\e(\rho^\e\;;\rho_0) - \frac1{\e^2}d(\rho_0,\rho^\e)^2 - \frac12 E(\rho^\e)
+ \frac12 E(\rho_0) + \log Z_\e .
%&= F_\e(q^\e) -F(\rho_1^\e)+ \log Z_\e - \e^\alpha I(\rho_1^\e).
\label{ineq:FZI}
\end{align}
The lower-bound estimate
\[
\liminf_{\e\to0} J_\e(\rho^\e\,;\rho_0) - \frac1{\e^2}d(\rho_0,\rho^\e)^2
\geq \frac12 E(\rho)
- \frac12 E(\rho_0)
\]
then follows from the Lemma below, which is proved in Section~\ref{sec:lowerbound}.
\begin{lemma}
\label{lemma:lowerbound}
We have
\begin{enumerate}
\item \label{item:lemma:lowerbound:lsc}
$\liminf_{\e\to0} E(\rho^\e)\geq E(\rho) $;
\item $\limsup_{\e\to0} Z_\e\leq 1$.
\end{enumerate}
\end{lemma}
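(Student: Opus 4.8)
Every $\rho^\e$ lies in $A_\delta$, hence is supported in the fixed interval $[0,L]$ and satisfies $\tfrac1L-\delta\le\rho^\e\le\tfrac1L+\delta$; so $\{\rho^\e\}$ is bounded in $L^2(0,L)$ and the narrow convergence $\rho^\e\weakto\rho$ upgrades to weak convergence in $L^2(0,L)$. Since $s\mapsto s\log s$ is convex and bounded below on $[0,\infty)$, the functional $E(\rho)=\int_0^L\rho\log\rho$ is convex and strongly continuous on $L^2(0,L)$, hence weakly sequentially lower semicontinuous, which gives $\liminf_{\e\to0}E(\rho^\e)\ge E(\rho)$.

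\textbf{Part (2): uniform regularity of the Kantorovich potentials.} The substantive part is $\limsup_{\e\to0}Z_\e\le1$; I will in fact aim to show $Z_\e\to1$. Let $\ph_\e,\ph_\e^*$ be the potentials associated with $d(\rho_0,\rho^\e)$ by Lemma~\ref{lemma:phph}. Because $\rho_0\in A_\delta\cap C([0,L])$ and $\rho^\e\in A_\delta$, these are \emph{uniformly} regular: by~\eqref{eq:phpp} and~\eqref{eq:phpptwo},
\[
\ph_\e''(x)=\frac{\rho_0(x)}{\rho^\e(\ph_\e'(x))},\qquad (\ph_\e^*)''(y)=\frac{\rho^\e(y)}{\rho_0\bigl((\ph_\e^*)'(y)\bigr)},
\]
and since the optimal monotone maps $\ph_\e'$, $(\ph_\e^*)'$ send $[0,L]$ into $[0,L]$, every density here lies in $[\tfrac1L-\delta,\tfrac1L+\delta]$; hence both second derivatives are pinched between $c_-:=\tfrac{1/L-\delta}{1/L+\delta}$ and $c_+:=c_-^{-1}$, with $\e$-independent constants. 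Thus $\ph_\e'$ and $(\ph_\e^*)'$ are mutually inverse bi-Lipschitz homeomorphisms of $[0,L]$, and, being absolutely continuous, $\ph_\e^*$ obeys the exact formula $\ph_\e^*(y)-\ph_\e^*(y_0)-(\ph_\e^*)'(y_0)(y-y_0)=\int_{y_0}^{y}(y-t)(\ph_\e^*)''(t)\,dt$, squeezed between $\tfrac{c_-}2(y-y_0)^2$ and $\tfrac{c_+}2(y-y_0)^2$ for $y,y_0\in[0,L]$.

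\textbf{Part (2): Laplace evaluation of $Z_\e$.} For $x\in(0,L)$ set $y^*:=\ph_\e'(x)$. The Fenchel equalities $(\ph_\e^*)'(y^*)=x$ and $\ph_\e^*(y^*)=xy^*-\ph_\e(x)$ identify the exponent in~\eqref{def:Ze} as $\ph_\e(x)+\ph_\e^*(y)-xy=g_x(y)\ge0$, with $g_x(y^*)=0$, $g_x'(y^*)=0$ and $g_x''=(\ph_\e^*)''$. Substituting $y=y^*+\e z$ turns the inner $y$-integral in $Z_\e$ into
\[
I_\e(x):=\frac1{\sqrt\pi}\int_{\R}\sqrt{\rho^\e(y^*+\e z)}\,e^{-G_\e^x(z)}\,dz,
\]
with $G_\e^x(z):=\tfrac2{\e^2}g_x(y^*+\e z)=2\int_0^z(z-s)(\ph_\e^*)''(y^*+\e s)\,ds$, so that $Z_\e=\int_0^L\sqrt{\rho_0(x)}\,I_\e(x)\,dx$. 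Since $\rho^\e$ vanishes outside $[0,L]$, the integrand defining $I_\e(x)$ is supported where $y^*+\e z\in[0,L]$, and there $c_-z^2\le G_\e^x(z)\le c_+z^2$; combined with $\sqrt{\rho^\e}\le\sqrt{\tfrac1L+\delta}$ this dominates $I_\e(x)$ by the $\e$-independent constant $\sqrt{(\tfrac1L+\delta)/c_-}$. The heuristic is that $I_\e(x)$ concentrates at scale $\e$ near $z=0$, where $G_\e^x(z)\approx(\ph_\e^*)''(y^*)z^2$ and $\sqrt{\rho^\e(y^*+\e z)}\approx\sqrt{\rho^\e(y^*)}$, so that $I_\e(x)\to\sqrt{\rho^\e(y^*)/(\ph_\e^*)''(y^*)}=\sqrt{\rho_0(x)}$ by the identity for $(\ph_\e^*)''$ above; then dominated convergence and $\int_0^L\rho_0=1$ give $Z_\e\to1$. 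So all of part (2) reduces to the pointwise limit $I_\e(x)\to\sqrt{\rho_0(x)}$ for a.e.\ $x$.

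\textbf{The main obstacle.} That pointwise limit is the genuinely delicate point, because $\rho^\e$ is merely an $L^\infty$ density — it carries no modulus of continuity and varies with $\e$ — so neither $\sqrt{\rho^\e(y^*+\e z)}\approx\sqrt{\rho^\e(y^*)}$ nor $G_\e^x(z)\approx(\ph_\e^*)''(y^*)z^2$ is available pointwise, and crudely replacing $\rho^\e$ by its supremum or $(\ph_\e^*)''$ by $c_\pm$ only yields $\limsup_\e Z_\e\le\sqrt{(1+L\delta)/(1-L\delta)}>1$, which is useless. The sharp constant must be recovered by exploiting the transport structure: changing variables $v=\ph_\e'(x)$ (so $dx=(\ph_\e^*)''(v)\,dv$, and the outer weight $\sqrt{\rho_0(x)}\,dx$ becomes $\rho^\e(v)\,\rho_0\bigl((\ph_\e^*)'(v)\bigr)^{-1/2}\,dv$ by the formula for $(\ph_\e^*)''$), one rewrites $Z_\e$ as the $v$-integral of $\rho^\e(v)\,\rho_0\bigl((\ph_\e^*)'(v)\bigr)^{-1/2}$ against a Gaussian-type average of $\sqrt{\rho^\e}$ over a window of width $\sim\e$ about $v$; the point is then to show, by a Lebesgue-differentiation / equi-integrability argument (using that $\{\rho^\e\}$ is bounded in $L^\infty$, that $(\ph_\e^*)''$ is pinched, and the push-forward identity $(\ph_\e')_\#\rho_0=\rho^\e$), that this window average collapses onto $\rho^\e(v)$ in the limit, so that the $v$-integral tends to $\int_0^L\rho^\e=1$. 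An alternative is a direct second-order Laplace estimate with explicit remainder, again leaning on the uniform $C^{1,1}$ bounds and uniform convexity of $\ph_\e,\ph_\e^*$. I expect this sharp evaluation of $Z_\e$ in the presence of the merely bounded, $\e$-dependent density $\rho^\e$ to be the technical core; the regularity of the potentials, the domination step, and part (1) are comparatively routine.
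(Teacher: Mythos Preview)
Part~(1) is fine and matches the paper's one-line argument (convexity / weak lower semicontinuity of $E$).

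Part~(2), however, has a genuine gap, and your diagnosis of the obstacle is more accurate than your proposed cure. You Laplace in the $y$-variable, writing $y=\ph_\e'(x)+\e z$; this puts $\sqrt{\rho^\e(\,\cdot\,+\e z)}$ in the inner integral, and you then hope that this $\e$-scale average of $\sqrt{\rho^\e}$ ``collapses onto $\sqrt{\rho^\e(v)}$'' by a Lebesgue-differentiation argument. But Lebesgue differentiation concerns a \emph{fixed} $L^1$ function as the window shrinks; here the function $\rho^\e$ varies with the window size $\e$, and for a sequence like $\rho^\e(y)=1+a\cos(2\pi y/\e)\in A_\delta$ the averaged quantity does \emph{not} approach $\sqrt{\rho^\e(v)}$ in any useful sense. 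Relatedly, you aim for $Z_\e\to1$, which is stronger than stated and is what the paper proves only in the recovery-sequence setting where $\rho^\e\equiv\rho$ is fixed; in the lower-bound setting only $\limsup_{\e}Z_\e\le1$ is asserted (and, plausibly, only this is true).

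The paper's route differs at two essential points. First, it Laplaces in the \emph{other} variable: writing $y=\ph_\e'(\xi)$ and $x=\xi+\e z$, the inner integral carries $\sqrt{\rho_0(\xi+\e z)}$, and $\rho_0$ is continuous and $\e$-independent, so its modulus of continuity handles that factor. All the $\e$-dependence and roughness of $\rho^\e$ is then hidden in $u:=\ph_\e''\in L^\infty$, which enters only through the exponent via an exact convolution identity
\[
\ph_\e(x)+\ph_\e^*(y)-xy=\tfrac{\e^2z^2}{2}\,(\kappa_\e^z*u)(\xi).
\]
Second, even after this better change of variables a straight Laplace bound only gives $\limsup Z_\e\le c_+^{?}>1$ (your own observation). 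The paper instead periodises $u$ to the torus, expands $e^{-(\kappa_\e^z*u)z^2}=e^{-uz^2}\bigl[1+(u-\kappa_\e^z*u)z^2\bigr]+R$, and measures both the linear correction and the remainder in a tailor-made Fourier seminorm $\|u\|_\e^2=\sum_k|u_k|^2(1-e^{-\pi^2k^2\e^2})$. The linear term is shown to contribute $\lesssim -\tfrac12\|u\|_\e^2$ (a discrete integration by parts, using that the map $s\mapsto h(\sigma^2 s)$ is decreasing), while the remainder is $\lesssim \tfrac{5}{12}\|u\|_\e^2$. Since $\tfrac12>\tfrac5{12}$, for $\delta$ small enough the sum is nonpositive and $\limsup Z_\e\le1$ follows. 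This sign competition, not a pointwise Laplace limit, is the mechanism; it is also the reason the theorem requires $\rho_0,\rho^\e$ to be uniformly close to the flat density. Your sketch contains none of these ingredients, and I do not see how to complete it along the Lebesgue-differentiation line you propose.
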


\medskip

For the \emph{recovery sequence}, part~\ref{item:th:main:recovery} of Theorem~\ref{th:main}, we first define the functional $G_\e\colon \M_1(\R\times\R)\to\R$ by
\[
G_\e(q) := H(q|(\pi_0 q)P^\e) - \frac1{\e^2} d(\pi_0 q,\pi_1 q)^2.
\]
Note that by~\eqref{expansion:H} and~\eqref{equiv:dq}, for any $q$ such that $\pi_0 q = \rho_0$ we have
\begin{multline}
\label{def:duality1}
G_\e(q) = E(q) - E(\rho_0) + \frac 12 \log \e^2\pi\\
+\inf_{\ph} \left\{ \frac2{\e^2}\iint q(x,y) \bigl(\ph(x)+\ph^*(y)-xy\bigr)\, dxdy :\ \ph\text{ convex}\right\}. 
\end{multline}

Now choose for $\ph$ the optimal convex function in the definition of $d(\rho_0,\rho)$, and let the function $\tilde q^\e$ be given by~\pref{def:tildeq}, where $\rho_1^\e$, $\ph_\e$, and $\ph_\e^*$ are replaced by the fixed functions $\rho$, $\ph$, and $\ph^*$. 
Define the correction factor $\chi_\e\in L^1(\pi_0\tilde q^\e)$ by
the condition
\begin{equation}
\label{def:chi_e}
\rho_0(x) = \chi_\e(x) \pi_0\tilde q^\e(x).
\end{equation}
We then set
\begin{align}
q^\e(x,y) &= \chi_\e(x)\tilde q^\e(x,y)\notag\\
&= Z_\e^{-1} \frac1{\e\sqrt\pi}\chi_\e(x)\sqrt{\rho_0(x)}\sqrt{\rho_1(y)}
  \exp\Bigl[\frac2{\e^2}(xy-\ph(x) - \ph^*(y))\Bigr],
\label{def:qe}
\end{align}
so that the first marginal $\pi_0q^\e$ equals $\rho_0$; in Lemma~\ref{lemma:upperbound} below we show that the second marginal converges to $\rho$. Note that the normalization constant $Z_\e$ above is the same as for $\tilde q^\e$, i.e., 
\[
Z_\e =  \frac1{\e\sqrt\pi}\int_K\int_K \sqrt{\rho_0(x)}\sqrt{\rho_1(y)}
  \exp\Bigl[\frac2{\e^2}(xy-\ph(x) - \ph^*(y))\Bigr]\, dxdy.
\]

Since the functions $\ph$ and $\ph^*$ are admissible for $d(\pi_0q^\e,\pi_1q^\e)$, we find with~\eqref{def:phph*}
\begin{align*}
d(\pi_0q^\e,\pi_1q^\e) &\geq \int_{\R} (x^2 - 2\ph(x))\pi_0q^\e(x)\, dx 
 + \int_{\R} (y^2-2\ph^*(y))\,\pi_1q^\e(y)\, dy\\
&= \iint \bigl[x^2 - 2\ph(x)-2\ph^*(y)+ y^2\bigr]q^\e(x,y)\, dxdy.
\end{align*}
Then
\begin{align*}
G_\e(q^\e) &\leq E(q^\e)  - E(\rho_0) + \frac 12 \log \e^2\pi 
%&\qquad{}
+ \frac2{\e^2}\iint q^\e(x,y) \bigl(\ph(x)+\ph^*(y)-xy\bigr) \, dxdy\\
&= -\log Z_\e + \iint q^\e(x,y) \log\chi_\e(x)\, dxdy  \\
&\qquad{} +\frac12 \iint q^\e(x,y) \log \rho_1(y)\, dxdy
 - \frac12 \iint q^\e(x,y)\log \rho_0(x)\, dxdy\\
&= -\log Z_\e + \int \rho_0(x) \log\chi_\e(x)\, dx \\
&\qquad{} +\frac12 \int \pi_1 q^\e(y) \log \rho_1(y)\, dy
 - \frac12 \int \rho_0(x)\log \rho_0(x)\, dx.
\end{align*}
The property~\pref{th:main:recovery} then follows from the lower bound and Lemma below, which is proved in Section~\ref{sec:upperbound}.
\begin{lemma}
\label{lemma:upperbound}
We have
\begin{enumerate}
\item $\lim_{\e\to0} Z_\e = 1$;
\item \label{lemma:upperbound:boundedness}
$\pi_{0,1}\tilde q^\e$ and $\chi_\e$ are bounded on $(0,L)$ from above and away from zero, uniformly in $\e$;
\item $\chi_\e \to 1$ in $L^1(0,L)$;
\item $\pi_1q^\e\to \rho_1$ in $L^1(0,L)$.
\end{enumerate}
\end{lemma}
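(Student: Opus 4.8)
\textbf{Proof proposal for Lemma~\ref{lemma:upperbound}.}
The plan is to establish the four assertions in the order (2), (1), (3), (4), since the pointwise bounds on $\pi_{0,1}\tilde q^\e$ and $\chi_\e$ are the engine that drives everything else. The key analytic fact is that, by Lemma~\ref{lemma:phph} and the regularity of the optimal transport map between two densities that are both bounded above and away from zero on $(0,L)$, the convex potential $\ph$ associated with $d(\rho_0,\rho)$ has $\ph'$ a bi-Lipschitz bijection of $(0,L)$ onto itself, and $\ph''(x)=\rho_0(x)/\rho(\ph'(x))$ is bounded above and away from zero. The smallness of $\delta$ in the statement of Theorem~\ref{th:main} is exactly what guarantees this: since $\rho_0,\rho\in A_\delta$ are within $\delta$ of the uniform density $L^{-1}$, the map $\ph'$ is a small perturbation of the identity, and in particular $\ph(x)+\ph^*(y)-xy\geq 0$ with equality precisely on the graph $y=\ph'(x)$. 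The first step is therefore a Laplace/Varadhan-type analysis of the integral defining $Z_\e$: rewrite
\[
Z_\e = \frac1{\e\sqrt\pi}\iint_{(0,L)^2}\sqrt{\rho_0(x)\rho(y)}\,\exp\Bigl[-\frac2{\e^2}\bigl(\ph(x)+\ph^*(y)-xy\bigr)\Bigr]\,dxdy,
\]
and for each fixed $x$ perform the inner $y$-integral by Laplace's method around the unique minimizer $y=\ph'(x)$, where the exponent vanishes and the second $y$-derivative of $\ph^*(y)-xy$ equals $(\ph^*)''(\ph'(x)) = 1/\ph''(x) = \rho(\ph'(x))/\rho_0(x)$. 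The Gaussian integral then produces a factor $\e\sqrt{\pi}\cdot\sqrt{\ph''(x)}/\sqrt{2}$ — here one must be careful about the constant: with exponent $-\tfrac{2}{\e^2}(\cdot)$ the Gaussian $\int e^{-(2/\e^2)\cdot\tfrac12 a s^2}ds = \e\sqrt{\pi}/\sqrt{2a}$ with $a = (\ph^*)''$ — times $\sqrt{\rho_0(x)\rho(\ph'(x))}\,(1+o(1))$, uniformly in $x$ by the uniform bounds on $\ph''$. Substituting $\rho(\ph'(x))\ph''(x) = \rho_0(x)$ collapses the $x$-integrand to $\rho_0(x)\,(1+o(1))$, whose integral is $1$; this proves (1), and a byproduct of the same computation (applying it to the numerators defining $\pi_0\tilde q^\e(x)$ and $\pi_1\tilde q^\e(y)$) gives (2).

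For (2) in detail: $\pi_0\tilde q^\e(x) = Z_\e^{-1}\sqrt{\rho_0(x)}\cdot\frac1{\e\sqrt\pi}\int_{(0,L)}\sqrt{\rho(y)}\exp[-\tfrac2{\e^2}(\ph(x)+\ph^*(y)-xy)]dy$, and the same Laplace analysis shows the inner integral equals $\tfrac{\e\sqrt\pi}{\sqrt2}\sqrt{\ph''(x)\rho(\ph'(x))}\,(1+o(1)) = \tfrac{\e\sqrt\pi}{\sqrt2}\sqrt{\rho_0(x)}\,(1+o(1))$ uniformly in $x\in(0,L)$ — wait, this needs the correct normalization; tracking constants carefully, one finds $\pi_0\tilde q^\e(x)\to\rho_0(x)$ uniformly, hence $\chi_\e = \rho_0/\pi_0\tilde q^\e$ is bounded above and below uniformly in $\e$ since $\rho_0$ itself is (as $\rho_0\in A_\delta$). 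The one subtlety is boundary effects: near $x=0$ or $x=L$ the minimizer $\ph'(x)$ lies in the interior (since $\ph'$ maps $(0,L)$ onto $(0,L)$), but the Laplace expansion must be done on the half-line of integration; because $\ph'$ is bi-Lipschitz close to the identity, for $\delta$ small the minimizer is at distance $\gtrsim x$ and $\gtrsim L-x$ from the endpoints in a way that still allows the Gaussian tail estimate, with error terms controlled by the ratio of the Gaussian width $\e$ to the distance-to-boundary; one handles the region within $O(\e\log(1/\e))$ of the boundary separately using the crude bound that the exponent is $\geq 0$. The uniform statement (2) then gives (3) immediately: $\chi_\e\to1$ pointwise a.e. from the uniform convergence $\pi_0\tilde q^\e\to\rho_0$, and dominated convergence on the bounded interval $(0,L)$ with the uniform bound from (2) upgrades this to $L^1(0,L)$.

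Finally, for (4): $\pi_1 q^\e(y) = \int\chi_\e(x)\tilde q^\e(x,y)\,dx = Z_\e^{-1}\sqrt{\rho(y)}\cdot\frac1{\e\sqrt\pi}\int_{(0,L)}\chi_\e(x)\sqrt{\rho_0(x)}\exp[-\tfrac2{\e^2}(\ph(x)+\ph^*(y)-xy)]dx$. Now the inner $x$-integral is a Laplace integral whose exponent, for fixed $y$, is minimized at $x = (\ph^*)'(y)$, the inverse map, where again the exponent vanishes; since $\chi_\e\to1$ uniformly by (2)–(3) and the second $x$-derivative there is $\ph''((\ph^*)'(y)) = \rho_0((\ph^*)'(y))/\rho(y)$, the same Gaussian computation yields $\pi_1 q^\e(y)\to\rho((\ph^*)'(y))\,((\ph^*)''(y))\cdot\text{(const)} = \rho(y)$ after using $\rho_0((\ph^*)'(y))(\ph^*)''(y)=\rho(y)$, uniformly in $y\in(0,L)$, hence in $L^1(0,L)$. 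The main obstacle, and where I would concentrate the real work, is making the Laplace-method expansion genuinely uniform in the base point $x$ (resp. $y$) \emph{including near the boundary of $(0,L)$}: one needs quantitative control, not just pointwise asymptotics, on the error in $\int e^{-(2/\e^2)g_x(s)}\,ds \approx e^{-(2/\e^2)g_x(s_x)}\sqrt{\pi\e^2/(2g_x''(s_x))}$ that is uniform over a family of smooth functions $g_x$ with uniformly bounded, uniformly positive second derivatives and with minimizers $s_x$ that may approach the endpoints of the integration interval. This is where the smallness of $\delta$ is used in an essential (not merely cosmetic) way, to keep $\ph'$ so close to the identity that the minimizer $\ph'(x)$ stays comfortably inside $(0,L)$ relative to the Gaussian scale $\e$, and to keep $\ph'', (\ph^*)''$ in a compact subinterval of $(0,\infty)$.
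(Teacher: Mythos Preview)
Your approach is the paper's at its core --- Laplace/Watson asymptotics for the inner integral, with the identity $(\ph^*)''(\ph'(x))=\rho_1(\ph'(x))/\rho_0(x)$ collapsing the constants --- but you overcommit to \emph{uniform} convergence, which is both false and unnecessary. Since $\ph'$ maps $[0,L]$ onto $[0,L]$ monotonically, $\ph'(0)=0$ and $\ph'(L)=L$; at $x=0$ the Gaussian in the inner integral is halved by the boundary, so $\pi_0\tilde q^\e(0)\to\tfrac12\rho_0(0)$, not $\rho_0(0)$. Uniform convergence of $\pi_0\tilde q^\e\to\rho_0$ and of $\chi_\e\to1$ therefore fails, and your use of ``$\chi_\e\to1$ uniformly'' in step~(4) is unjustified.

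The paper sidesteps the whole uniformity issue. For $\liminf Z_\e\geq 1$ it applies Watson's Lemma pointwise in $x$ and then Fatou. For $\limsup Z_\e\leq 1$ it does not attempt uniform Laplace asymptotics; instead it replaces $\sqrt{\rho_0(x)}$ by $\sqrt{\rho_0((\ph^*)'(y))}$, controls the difference via the modulus of continuity of $\sqrt{\rho_0}$ against the crude envelope $xy-\ph(x)-\ph^*(y)\leq -\tfrac16(x-(\ph^*)'(y))^2$ (from $\ph''\geq 1/3$), and then uses Watson pointwise in $y$ with dominated convergence. This splitting is precisely where the hypothesis $\rho_0\in C([0,L])$ enters --- note that your sketch never invokes it, which is a red flag. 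For part~(2) the paper uses only the same crude Gaussian envelope (e.g.\ $\pi_0\tilde q^\e(x)\leq Z_\e^{-1}\|\rho_0\|_\infty^{1/2}\|\rho_1\|_\infty^{1/2}\sqrt{3}$), which is robust at the boundary and requires no asymptotic expansion; the lower bound comes analogously from $\ph''\leq 3$ and a half-Gaussian. Parts~(3) and~(4) then follow from pointwise a.e.\ convergence plus the uniform bounds via dominated convergence, not from uniform convergence.
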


\section{Upper bound}
\label{sec:upperbound}

In this section we prove Lemma~\ref{lemma:upperbound}, and we place ourselves in the context of the recovery property, part~\ref{item:th:main:recovery}, of Theorem~\ref{th:main}. Therefore we are given $\rho_0,\rho_1\in A_\delta$ with $\rho_0\in C([0,L])$, and as described in Section~\ref{subsec:overview} we have constructed the pair measures $q^\e$ and $\tilde q^\e$ as in~\pref{def:qe}; the convex function $\ph$ is associated with $d(\rho_0,\rho_1)$. The parameter $\delta$  will be determined in the proof of the lower bound; for the upper bound it is sufficient that $0<\delta<1/2$, and therefore that $1/2\leq \rho_0,\rho_1\leq 3/2$. Note that this implies that $\ph''$ and ${\ph^*}''$ are bounded between $1/3$ and $3$.

\medskip
By Aleksandrov's theorem~\cite[Th.~6.4.I]{EvansGariepy92} the convex function $\ph^*$ is twice differentiable at Lebesgue-almost every point $y\in\R$.  Let $N_x\subset \R$ be the set where $\ph$ is not differentiable; this is a Lebesgue null set. Let $N_y\subset \R$ be the set at which $\ph^*$ is not twice differentiable, or at which $({\ph^*})''$ does exist but vanishes; the first set of points is a Lebesgue null set, and the second is a $\rho_1$-null set by~\pref{eq:phpptwo}; therefore $\rho_1(N_y) =0$. Now set 
\[
N = N_x \cup \partial \ph^*(N_y);
\]
here $\partial \ph^*$ is the (multi-valued) sub-differential of $\ph^*$. Then $\rho_0(N) \leq \rho_0(N_x) + \rho_0(\partial \ph^*(N_y)) = 0+\rho_0(\partial \ph^*(N_y)) = \rho_1(N_y) = 0$, where the second identity follows from~\cite[Lemma~4.1]{McCann97}.

Then, since ${\ph^*}'(\ph'(x))=x$, we have for any $x\in \R\setminus N$, 
\[
\ph^*(y) = \ph^*(\ph'(x))+ x(y-\ph'(x)) 
  + \frac12 {\ph^*}''(\ph'(x))(y-\ph'(x))^2 + o((y-\ph'(x))^2),
\]
so that, using $\ph(x) + \ph^*(\ph'(x)) = x\ph'(x)$, 
\[
\ph(x) + \ph^*(y) -xy= 
   \frac12 {\ph^*}''(\ph'(x))(y-\ph'(x))^2 + o((y-\ph'(x))^2).
\]
Therefore for each $x\in \R\setminus N$ the single integral
\begin{multline*}
\frac1\e \int_\R \sqrt{\rho_1(y)} \exp\Bigl[\frac2{\e^2}(xy-\ph(x) - \ph^*(y))\Bigr]\, dy
=\\
=\frac1\e \int_\R \sqrt{\rho_1(y) }\exp\Bigl[-\frac1{\e^2}{\ph^*}''(\ph'(x))(y-\ph'(x))^2 + o(\e^{-2}(y-\ph'(x))^2)\Bigr]\, dy
\end{multline*}
can be shown by Watson's Lemma to converge to 
\begin{equation}
\label{conv:Watson}
\sqrt{\rho_1(\ph'(x))}\sqrt\pi \frac1{\sqrt{{\ph^*}''(\ph'(x))}} = \sqrt\pi \, \sqrt{\rho_0(x)}.
\end{equation}
By Fatou's Lemma, therefore,
\begin{equation}
\label{conv:Ze-upperbound-liminf}
\liminf_{\e\to0} Z_\e \geq 1.
\end{equation}

By the same argument as above, and using the lower bound $\ph''\geq1/3$, we find that 
\begin{equation}
\label{bounds:on-ph''}
xy - \ph(x)-\ph^*(y)\leq \min\left\{ -\frac 16 (x-{\ph^*}'(y))^2, -\frac 16(y-\ph'(x))^2\right\}.
\end{equation}
Then we can estimate
\begin{multline}
\frac1\e \int_\R \int_\R \sqrt{\rho_0(x)}\sqrt{\rho_1(y)}
  \exp\Bigl[\frac2{\e^2}(xy-\ph(x) - \ph^*(y))\Bigr]\, dxdy\\
\leq \frac1\e \int_0^L \int_0^L \sqrt{\rho_0({\ph^*}'(y))}\sqrt{\rho_1(y)}
  \exp\Bigl[\frac2{\e^2}(xy-\ph(x) - \ph^*(y))\Bigr]\, dxdy\\
  + \frac1\e \int_0^L \int_0^L 
    \Bigl|\sqrt{\rho_0(x)}- \sqrt{\rho_0({\ph^*}'(y))}\Bigr|\sqrt{\rho_1(y)}
  \exp\Bigl[-\frac 1{3\e^2}(x-{\ph^*}'(y))^2\Bigr]\, dxdy.
  \label{est:Ze-upperbound-split}
\end{multline}
By the same argument as above, in the first term the inner integral converges at $\rho_1$-almost every $y$ to $\rho_1(y)\sqrt{\pi}$ and is bounded by
\[
\frac1\e \|\rho_0\|_\infty^{1/2} \|\rho_1\|_\infty^{1/2} 
  \int_\R\exp\Bigl[-\frac 1{3\e^2}(x-{\ph^*}'(y))^2\Bigr]\, dx
  = \|\rho_0\|_\infty^{1/2} \|\rho_1\|_\infty^{1/2} \sqrt{3\pi},
\]
so that
\begin{equation}
\label{conv:Ze-upperbound}
\lim_{\e\to0} \frac1\e \int_0^L \int_0^L \sqrt{\rho_0({\ph^*}'(y))}\sqrt{\rho_1(y)}
  \exp\Bigl[\frac2{\e^2}(xy-\ph(x) - \ph^*(y))\Bigr]\, dxdy  = \sqrt\pi.
\end{equation}
To estimate the second term we note that since ${\ph^*}'$ maps $[0,L]$ to $[0,L]$, we can estimate
\[
\Bigl|\sqrt{\rho_0(x)}- \sqrt{\rho_0({\ph^*}'(y))}\Bigr| \leq \omega_{\sqrt{\rho_0}}(|x-{\ph^*}'(y)|),
\qquad\text{for all }(x,y)\in [0,L]\times[0,L],
\]
where $\omega_{\sqrt{\rho_0}}$ is the modulus of continuity of $\sqrt{\rho_0}\in C([0,L])$.
Then
\begin{align}
\frac1\e &\int_0^L \int_0^L 
    \Bigl|\sqrt{\rho_0(x)}- \sqrt{\rho_0({\ph^*}'(y))}\Bigr|\sqrt{\rho_1(y)}
  \exp\Bigl[-\frac 1{3\e^2}(x-{\ph^*}'(y))^2\Bigr]\, dxdy\notag\\
&\leq \frac1\e \omega_{\sqrt{\rho_0}}(\eta)\|\rho_1\|_\infty^{1/2}
  \int_0^L \int_{\{x\in [0,L]: |x-{\ph^*}'(y)|\leq \eta\}}
  \exp\Bigl[-\frac 1{3\e^2}(x-{\ph^*}'(y))^2\Bigr]\, dxdy\notag\\
&\qquad{}+\frac1\e \|\rho_0\|_\infty^{1/2}\|\rho_1\|_\infty^{1/2}
  \int_0^L \int_{\{x\in [0,L]: |x-{\ph^*}'(y)|>\eta\}}
    \exp\Bigl[-\frac 1{3\e^2}(x-{\ph^*}'(y))^2\Bigr]\, dxdy\notag\\
&\leq  \omega_{\sqrt{\rho_0}}(\eta)\|\rho_1\|_\infty^{1/2}
  L \sqrt{3\pi }+\frac1\e \|\rho_0\|_\infty^{1/2}\|\rho_1\|_\infty^{1/2}
  L^2 \exp \Bigl[-\frac {\eta}{3\e^2}\Bigr]. 
\label{est:continuity}
\end{align}
The first term above can be made arbitrarily small by choosing $\eta>0$ small, and for any fixed $\eta>0$ the second converges to zero as $\e\to0$. Combining~\pref{conv:Ze-upperbound-liminf}, \pref{est:Ze-upperbound-split}, \pref{conv:Ze-upperbound}  and~\pref{est:continuity}, we find the first part of Lemma~\ref{lemma:upperbound}: 
\[
\lim_{\e\to0}  Z_\e= 1.
\]

Continuing with part~\ref{lemma:upperbound:boundedness} of Lemma~\ref{lemma:upperbound}, we note that by~\pref{bounds:on-ph''}, e.g.,
\begin{align*}
\pi_0\tilde q^\e(x)
&\leq Z_\e^{-1} \frac1{\e\sqrt \pi} \sqrt{\rho_0(x)}
\int_0^L \sqrt{\rho_1(y)} \exp\Bigl[-\frac 1{3\e^2}(y-\ph'(x))^2\Bigr]\, dy\\
&\leq Z_\e^{-1} \|\rho_0\|_\infty^{1/2}\|\rho_1\|_\infty^{1/2}{\sqrt 3}.
\end{align*}
Since $Z_\e\to1$, $\pi_0\tilde q^\e$ is uniformly bounded from above. A similar argument holds for the upper bound on $\pi_1\tilde q^\e$, and by applying upper bounds on $\ph''$ and ${\ph^*}''$ we also obtain uniform lower bounds on $\pi_0\tilde q^\e$ and $\pi_1\tilde q^e$. The boundedness of $\chi_\e$ then follows from~\pref{def:chi_e} and the bounds on $\rho_0$.

\medskip
We conclude with the convergence of the $\chi_\e$ and $\pi_1 q^\e$. By~\pref{conv:Watson} and~\pref{conv:Ze-upperbound} we have for almost all $x\in (0,L)$,
\[
\pi_0\tilde q^\e(x) = Z_\e^{-1}\sqrt{\rho_0(x)}\frac1{\e\sqrt\pi}\int\sqrt{\rho_1(y)} \exp\Bigl[\frac2{\e^2}(xy-\ph(x)-\ph^*(y))\Bigr]\, dy\longrightarrow \rho_0(x),
\]
and the uniform bounds on $\pi_0\tilde q^\e$ imply that $\pi_0\tilde q^e$ converges to $\rho_0$ in $L^1(0,L)$.
Therefore also $\chi_\e\to1$ in $L^1(0,L)$. A similar calculation gives $\pi_1 q^\e\to\rho_1$ in $L^1(0,L)$. This concludes the proof of Lemma~\ref{lemma:upperbound}.
\qed

\section{Lower bound}
\label{sec:lowerbound}

This section gives the proof of the lower-bound estimate, part~\ref{item:th:main:lowerbound} of Theorem~\ref{th:main}. Recall that in the context of part~\ref{item:th:main:lowerbound} of Theorem~\ref{th:main}, we are given a fixed $\rho_0\in A_\delta\cap C([0,L])$ and a sequence $(\rho^\e)\subset A_\delta$ with $\rho^\e\weakto \rho$. In Section~\ref{subsec:overview} we described how the lower  the lower-bound inequality~\eqref{th:main:lowerbound} follows from two inequalities (see Lemma~\ref{lemma:lowerbound}). The first of these, $\liminf_{\e\to0} E(\rho^\e)\geq E(\rho)$, follows directly from the convexity of the functional~$E$.

The rest of this section is therefore devoted to the proof of the second inequality of Lemma~\ref{lemma:lowerbound}, 
\begin{equation}
\label{ineq:Ze}
\limsup_{\e\to0} Z_\e\leq 1.
\end{equation} 
Here $Z_\e$ is defined in~\pref{def:Ze} as
\[
Z_\e := \frac1{\e\sqrt\pi}\iint\limits_{\R\times\R} \sqrt{\rho_0(x)}\sqrt{\rho^\e(y)}
  \exp\Bigl[\frac2{\e^2}(xy-\ph_\e(x) - \ph_\e^*(y))\Bigr]\, dxdy,
\]
where we extend $\rho_0$ and $\rho^\e$ by zero outside of $[0,L]$, and $\ph_\e$ is associated with $d(\rho_0,\rho^\e)$ as in Lemma~\ref{lemma:phph}. This implies among other things that $\ph_\e$ is twice differentiable on $[0,L]$, and 
\begin{equation}
\label{eq:phpp2}
\ph_\e''(x) = \frac{\rho_0(x)}{\rho^\e(\ph_\e'(x))}
\qquad\text{for all }x\in[0,L].
\end{equation}

We restrict ourselves to the case $L=1$, that is, to the interval $K:=[0,1]$; by a rescaling argument this entails no loss of generality.
We will prove below that there exists a $0<\delta\leq1/3$ such that whenever 
\[
\hat\delta := \max\left\{\|\rho_0-1\|_{L^\infty(K)}, \ \sup_\e \left\|\frac{\rho^\e}{\rho_0}-1\right\|_{L^\infty(K)}\right\}\leq \delta,
\]
the inequality~\eqref{ineq:Ze} holds. This implies the assertion of Lemma~\ref{lemma:lowerbound} and concludes the proof of Theorem~\ref{th:main}.

\medskip

\subsection{Main steps}
A central step in the proof is a reformulation of the integral defining $Z_\e$ in terms of a convolution. Upon writing $y=\ph'_\e(\xi)$ and $x=\xi + \e z$, and using 
$\ph_\e(\xi) + \ph_\e^*(\ph_\e'(\xi)) = \xi \ph_\e'(\xi)$, we can rewrite the exponent in $Z_\e$ as
\begin{align}
\label{calc:1}
\ph_\e(x) &+ \ph_\e^*(y) -xy = \\
&=\ph_\e(\xi+\e z) + \ph_\e^* (\ph_\e'(\xi)) - (\xi+\e z) \ph_\e'(\xi)\label{calc:2}\\
&=\ph_\e(\xi+\e z) - \ph_\e(\xi) - \e z \ph_\e'(\xi)\notag\\
&=\e^2 \int_0^z (z-s)\ph_\e''(\xi+\e s)\, ds\notag\\
&=\frac{z^2\e^2}2 \bigl(\kappa_\e^z*\ph_\e''\bigr)(\xi),
\label{calc:5}
\end{align} 
where we define the convolution kernel $\kappa_\e^z$ by
\[
\kappa^{z}_\e(s) = \e^{-1}\kappa^z(\e^{-1}s)
\qquad\text{and}\qquad
\kappa^z(\sigma) = 
\begin{cases}
\frac2{z^2}(z+\sigma) & \text{if } -z\leq\sigma \leq 0\\
-\frac2{z^2}(z+\sigma) & \text{if } 0\leq \sigma\leq -z\\
0 &\text{otherwise.}
\end{cases}
\]

\begin{figure}[h]
\centering
\psfig{figure=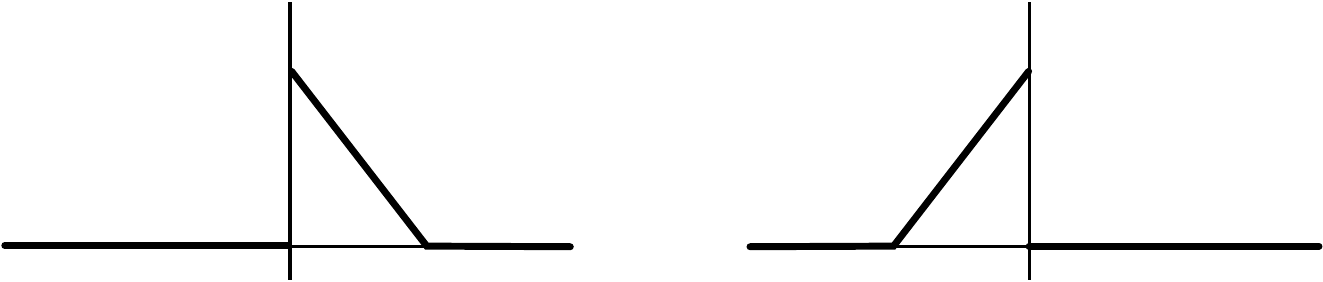,height=3cm}
\caption{The function $\kappa^z_\e$ for negative and positive values of $z$.}
\end{figure}

While the domain of definition of~\eqref{calc:1} is a convenient rectangle $K^2=[0,1]^2$, after transforming to~\eqref{calc:2}  this domain becomes an inconvenient $\e$-dependent parallellogram in terms of $z$ and $\xi$. The following Lemma therefore allows us to switch to a more convenient setting, in which we work on the flat torus $\T=\R/\Z$ (for $\xi$) and $\R$ (for $z$).

\begin{lemma}
\label{lemma:approx_u}
Set $u\in L^\infty(\T)$ to be the periodic function on the torus $\T$ such that $u(\xi) = \ph_\e''(\xi)$  for all $\xi\in K$ (in particular, $u\geq0$).
There exists a function $\omega\in C([0,\infty))$ with $\omega(0)=0$, depending only on $\rho_0$, such that for all $\hat \delta\leq1/3$
\[
\sqrt \pi\; Z_\e \leq \omega(\e) + \int_\T \rho_0(\xi)\sqrt{u(\xi)}
  \int_\R \exp[-(\kappa^z_\e*u)(\xi)z^2]\, dzd\xi.
\]
\end{lemma}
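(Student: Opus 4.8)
The plan is to convert the Watson/Laplace-type asymptotics used for the upper bound into a genuine upper bound that is valid before taking $\e\to0$, at the cost of a vanishing error term $\omega(\e)$. The starting point is the exact identity
\[
\sqrt\pi\,Z_\e = \frac1\e\iint_{K\times K}\sqrt{\rho_0(x)}\sqrt{\rho^\e(y)}\exp\Bigl[\frac2{\e^2}\bigl(xy-\ph_\e(x)-\ph_\e^*(y)\bigr)\Bigr]\,dxdy,
\]
into which I substitute $y=\ph_\e'(\xi)$, $x=\xi+\e z$. This is the change of variables already carried out in \pref{calc:1}--\pref{calc:5}: the exponent becomes $-z^2(\kappa_\e^z*\ph_\e'')(\xi)$, and the Jacobian of $y=\ph_\e'(\xi)$ is $\ph_\e''(\xi)\,d\xi$, which combines with the factor $\sqrt{\rho^\e(y)}/\e$ and the relation \pref{eq:phpp2}, $\ph_\e''(\xi)=\rho_0(\xi)/\rho^\e(\ph_\e'(\xi))$, to give exactly $\rho_0(\xi)\sqrt{\ph_\e''(\xi)}\,dzd\xi$. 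Thus, up to the domain issue, $\sqrt\pi\,Z_\e$ equals $\int\int \rho_0(\xi)\sqrt{\ph_\e''(\xi)}\exp[-z^2(\kappa_\e^z*\ph_\e'')(\xi)]\,dzd\xi$, which is the desired right-hand side once $\ph_\e''$ on $K$ is replaced by its periodic extension $u$ on $\T$.

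Next I must deal honestly with the two discrepancies this rewriting introduces. First, the image of $K\times K$ under $(x,y)\mapsto(\xi,z)$ is not $\T\times\R$ but an $\e$-dependent parallelogram: for fixed $\xi\in K$ the variable $z=(x-\xi)/\e$ ranges only over $\{z: \xi+\e z\in K\}$, an interval containing a neighbourhood of $0$ of length $\ge \min(\xi,1-\xi)/\e\to\infty$. Since the integrand is nonnegative, enlarging the $z$-domain from this interval to all of $\R$ can only increase the integral, so replacing the parallelogram by $\T\times\R$ is an inequality in the right direction — this is precisely why the Lemma asserts $\le$ rather than $=$. (One should also note that the convolution $\kappa_\e^z*\ph_\e''$ appearing after the substitution only ever samples $\ph_\e''$ at points $\xi+\e s$ with $s$ between $0$ and $z$; when $\xi+\e z\in K$ all these lie in $K$, so replacing $\ph_\e''$ by the periodic extension $u$ changes nothing on the original domain, and on the enlarged domain it is simply a definition of how we extend the integrand. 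This makes the passage to $u$ on $\T$ consistent.) Second, in the change of variables $y=\ph_\e'(\xi)$ the map $\xi\mapsto\ph_\e'(\xi)$ sends $K=[0,1]$ onto $[\ph_\e'(0),\ph_\e'(1)]$, which need not be exactly $K$; but from $\ph_\e''=\rho_0/\rho^\e(\ph_\e'(\cdot))$ and the closeness of $\rho_0,\rho^\e$ to $1$ one gets $|\ph_\e'(\xi)-\xi|\le C\hat\delta$ uniformly, so $\ph_\e'$ maps $K$ into an $O(\hat\delta)$-neighbourhood of $K$; the part of the $y$-integral outside $K$ is controlled because $\rho^\e$ vanishes there, and the part where $\xi\notin K$ but $\ph_\e'(\xi)\in K$ contributes an $O(\hat\delta)$, hence $O(\e)$-independent but small, boundary layer. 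The cleanest route is to absorb all such boundary effects, together with the (negligible) contribution of the region where the Gaussian-type factor is exponentially small, into the single error function $\omega(\e)$, checking that $\omega$ depends only on $\rho_0$ (via its $L^\infty$ norm and the constant in $|\ph_\e'(\xi)-\xi|\le C\hat\delta\le C/3$) and satisfies $\omega(0)=0$.

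The main obstacle is bookkeeping rather than a deep estimate: one must carry out the substitution $(x,y)\mapsto(\xi,z)$ rigorously even though $\ph_\e$ is only twice differentiable almost everywhere, justify the change of variables $y=\ph_\e'(\xi)$ (which is monotone since $\ph_\e''\ge1/(3\cdot 3)>0$ under $\hat\delta\le1/3$, hence bi-Lipschitz and a legitimate change of variables), and track precisely which leftover pieces are genuinely $o(1)$ in $\e$ versus merely small in $\hat\delta$. Because the Lemma only needs to package these into some $\omega(\e)\to0$ depending on $\rho_0$, I would keep the estimates crude: bound $\rho_0,\rho^\e,\ph_\e'',{\ph_\e^*}''$ above and below by absolute constants (as in the upper-bound section), use $\ph_\e'(K)\subset$ a fixed compact interval, and use the nonnegativity of the integrand freely to drop or add regions in the direction that makes the inequality go the right way. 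The one point that requires a little care is ensuring the error term does not secretly depend on $\e$ through $\ph_\e$; this is handled by noting all the constants above are uniform in $\e$ once $\hat\delta\le\delta\le1/3$, so that the only genuinely $\e$-dependent piece — the exponentially small tail of the Gaussian outside a fixed window and the width of the boundary parallelogram strips — can be bounded by an explicit function of $\e$ tending to $0$, giving the claimed $\omega$.
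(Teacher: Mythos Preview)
Your change of variables contains a genuine error, and it is exactly this error that hides the true source of $\omega(\e)$. After substituting $x=\xi+\e z$, the factor $\sqrt{\rho_0(x)}$ becomes $\sqrt{\rho_0(\xi+\e z)}$, not $\sqrt{\rho_0(\xi)}$. Combining the Jacobian $\ph_\e''(\xi)\,d\xi$ with $\sqrt{\rho^\e(\ph_\e'(\xi))}$ and~\pref{eq:phpp2} produces only $\sqrt{\rho_0(\xi)}\sqrt{\ph_\e''(\xi)}$; the remaining factor $\sqrt{\rho_0(\xi+\e z)}$ survives. Thus the exact identity after the substitution is
\[
\sqrt\pi\,Z_\e=\int_K\sqrt{\rho_0(\xi)}\sqrt{\ph_\e''(\xi)}\int_\R\sqrt{\rho_0(\xi+\e z)}\,\exp\bigl[-z^2(\kappa_\e^z*\ph_\e'')(\xi)\bigr]\,dz\,d\xi,
\]
with $\rho_0$ extended by zero outside $K$. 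Reaching the target integrand $\rho_0(\xi)\sqrt{u(\xi)}\exp[\ldots]$ requires replacing $\sqrt{\rho_0(\xi+\e z)}$ by $\sqrt{\rho_0(\xi)}$, and \emph{this} replacement --- bounded by the modulus of continuity $\omega_{\sqrt{\rho_0}}(\e|z|)$ integrated against a Gaussian tail in $z$ --- is what actually generates $\omega(\e)$ and is the reason the error depends only on $\rho_0$.

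Both discrepancies you do identify are, by contrast, non-issues. The $z$-domain enlargement is free: with $\rho_0$ extended by zero the integrand already vanishes when $\xi+\e z\notin K$, and adding the (nonnegative) target integrand on the leftover strip $K\setminus K^{\e z}$ only pushes the inequality in the right direction. As for $\ph_\e'(K)\ne K$: $\ph_\e'$ is the optimal transport map from $\rho_0$ to $\rho^\e$, both supported exactly on $K$ with densities bounded away from zero, so in fact $\ph_\e'(K)=K$ and there is no boundary layer at all. Your own observation that this supposed layer would be ``$O(\hat\delta)$, hence $O(\e)$-independent'' should have been a warning sign: an error that does not vanish as $\e\to0$ cannot be absorbed into a function $\omega$ with $\omega(0)=0$.
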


Given this Lemma it is sufficient to estimate the integral above. To explain the main argument that leads to the inequality~\eqref{ineq:Ze}, we give a heuristic description that is mathematically false but morally correct; this will be remedied below. 

We  approximate in $Z_\e$  an expression of the form $e^{-a-b}$ by $e^{-a}(1-b)$ (let us call this perturbation 1), and we set $\rho_0\equiv 1$ (perturbation 2). Then
\begin{align*}
\sqrt \pi\; Z_\e &- \omega(\e) \leq \int_\T \sqrt{u(\xi)}
  \int_\R e^{-u(\xi)z^2}\bigl[1-(\kappa^z_\e*u - u)(\xi)z^2\bigr]\, dzd\xi\\
&= \int_\T \sqrt{u(\xi)}
  \int_\R e^{-u(\xi)z^2}\, dzd\xi
  -\int_\T \sqrt{u(\xi)}
  \int_\R e^{-u(\xi)z^2}\bigl[(\kappa^z_\e*u) - u\bigr](\xi)z^2\, dzd\xi.
\end{align*}
The first term can be calculated by setting $\zeta=z\sqrt{u(\xi)}$, 
\[
\int_\T \int_\R e^{-\zeta^2}\, d\zeta d\xi = \int_\T \sqrt \pi \, d\xi = \sqrt\pi.
\]
In the second term we approximate $(\kappa^z_\e*u)(\xi) - u(\xi)$ by $c u''(\xi)\e^2 z^2$, where $c = \frac14 \int s^2 \kappa^z(s)\, ds$ (this is perturbation 3). Then this term becomes, using the same transformation to $\zeta$ as above,
\begin{align}
\notag
-c\e^2\int_\T \sqrt{u(\xi)}  \int_\R e^{-u(\xi)z^2}u''(\xi)z^4\, dzd\xi
  &= -c\e^2\int_\T \frac{u''(\xi)}{u(\xi)^2} \int_\R e^{-\zeta^2}\zeta^4\, d\zeta d\xi\\
&= -2c\e^2 \int_\T \frac{u'(\xi)^2}{u(\xi)^3} \,\sqrt\pi\, d\xi.
\label{perturbation_argument}
\end{align}
Therefore this term is negative and of order $\e^2$ as $\e\to0$, and the inequality~\eqref{ineq:Ze} follows.

The full argument below is based on this principle, but corrects for the three perturbations made above. Note that the difference 
\begin{equation}
\label{perturbation_2}
e^{-a-b} - e^{-a}(1-b)
\end{equation}
is positive, so that the ensuing correction competes with~\eqref{perturbation_argument}. In addition,  both the beneficial contribution from~\eqref{perturbation_argument} and the detrimental contribution from~\eqref{perturbation_2} are of order $\e^2$. The argument only works because the corresponding constants happen to be ordered in the right way, and then only when  $\|u-1\|_\infty$ is small. This is the reason for the restriction represented by $\delta$.

\subsection{Proof of Lemma~\ref{lemma:approx_u}}

Since $\hat \delta\leq 1/3$, then~\pref{eq:phpp2} implies that $\ph'_\e$ is Lipschitz on $K$, 
and we can transform $Z_\e$  following the sequence~(\ref{calc:1})--(\ref{calc:5}), and using $\supp\rho_0, \rho^\e = K$:
\begin{align*}
\sqrt \pi\; Z_\e &= \frac1{\e}\int_K \sqrt{\rho^\e(y)}\int_K \sqrt{\rho_0(x)}
  \exp\Bigl[\frac2{\e^2}(xy-\ph_\e(x) - \ph_\e^*(y))\Bigr]\, dxdy\\
&=\int_K \sqrt{\rho^\e(\ph_\e'(\xi))}
  \int\limits_{-\xi/\e}^{(1-\xi)/\e} \sqrt{\rho_0(\e z+{\ph_\e^*}'(y))}\exp[-(\kappa_\e^z*\ph_\e'')(\xi)z^2]\, dz\,\ph_\e''(\xi)d\xi\\
&= \int_K \sqrt{\rho_0(\xi)}\sqrt{\ph_\e''(\xi)}
  %\int\limits_{-\xi/\e}^{(1-\xi)/\e}
  \int_\R\sqrt{\rho_0(\xi+ \e z)}\exp[-(\kappa_\e^z*\ph_\e'')(\xi)z^2]\, dzd\xi,
\end{align*}
where we used~\eqref{eq:phpp2} in the last line.

Note that $(\kappa_\e^z*\ph_\e'')(\xi)z^2=(\kappa_\e^z * u)(\xi)z^2$ for all $z\in\R$ and for all $\xi\in K^{\e z}$, where $K^{\e z}$ is the interval $K$ from which an interval of length $\e z$ has been removed from the left (if $z<0$) or from the right (if $z>0$). Therefore
\begin{align*}
\sqrt\pi Z_\e &- \int_\T\rho_0(\xi)\sqrt{u(\xi)} \int_\R \exp[-(\kappa_\e^z*u)(\xi)z^2]\, dzd\xi\\
=& \int_\R\int_{K^{\e z}} \sqrt{\rho_0(\xi)}\sqrt{u(\xi)}
  \Bigl(\sqrt{\rho_0(\xi+\e z)}-\sqrt{\rho_0(\xi)}\Bigr)\exp[-(\kappa_\e^z*u)(\xi)z^2]\, d\xi dz\\
&+ \int_\R\int_{K\setminus K^{\e z}}\sqrt{\rho_0(\xi)}\sqrt{u(\xi)}
  \sqrt{\rho_0(\xi+\e z)}\exp[-(\kappa_\e^z*u)(\xi)z^2]\, d\xi dz\\
&- \int_\R\int_{K\setminus K^{\e z}} \rho_0(\xi)\sqrt{u(\xi)}  
  \exp[-(\kappa_\e^z*u)(\xi)z^2]\, d\xi dz.
\end{align*}
The final term is negative and we discard it. From the assumption $\hat \delta\leq 1/2$ we deduce $\|u-1\|_\infty\leq 1/2$, so that the first term on the right-hand side can be estimated from above (in terms of the modulus of continuity $\omega_{\rho_0}$ of $\rho_0$) by 
\[
\|\rho_0\|_{L^\infty(K)}^{1/2}\|u\|_{L^\infty(K)}^{1/2} \int_\R \int_{K^{\e z}} \omega_{\rho_0}(\e z) e^{-z^2/2}\, d\xi dz
\leq\frac32  \int_\R \omega_{\rho_0}(\e z) e^{-z^2/2}\, dz,
\]
which converges to zero as $\e\to0$, with a rate of convergence that depends only on~$\rho_0$. 
Similarly, the middle term we estimate by
\[
\|\rho_0\|_{L^\infty(K)}\|u\|_{L^\infty(K)}^{1/2} \int_\R |{K\setminus K^{\e z}}| e^{-z^2/2}\, dz
\leq \Bigl(\frac32\Bigr)^{3/2} \e \int_\R |z| e^{-z^2/2}\, dz,
\]
which converges to zero as $\e\to0$.
\qed

\subsection{The semi-norm $\|\cdot\|_\e$}
\label{subsec:Xe}

It is convenient to introduce a specific semi-norm for the estimates that we make below, which takes into account the nature of the convolution expressions. 
On the torus $\T$ we define 
\[
\|u\|_{\e}^2  := \sum_{k\in \Z} |u_k|^2 \bigl(1-e^{-\pi^2k^2\e^2}\bigr),
\]
where the $u_k$ are the Fourier coefficients of $u$, 
\[
u(x) = \sum_{k\in\Z} u_k e^{2\pi i k x}.
\]
The following Lemmas give the relevant properties of this seminorm. 

\begin{lemma}
\label{lemma:FD}
For $\e>0$,
\begin{equation}
\label{id:X_e}
\int_\R  e^{-z^2} \int_\T (u(x+\e z)-u(x))^2 \, dx dz
= 2\sqrt \pi \| u\|_{\e}^2.
\end{equation}
\end{lemma}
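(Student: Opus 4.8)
\textbf{Proof proposal for Lemma~\ref{lemma:FD}.}

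The plan is to expand both sides in Fourier series and compare coefficient by coefficient, using Parseval/Plancherel on the torus together with the Gaussian integral $\int_\R e^{-z^2}\,dz = \sqrt\pi$. First I would write $u(x) = \sum_{k\in\Z} u_k e^{2\pi i k x}$, so that $u(x+\e z) - u(x) = \sum_{k\in\Z} u_k\bigl(e^{2\pi i k\e z}-1\bigr)e^{2\pi i k x}$. Applying Parseval in the $x$-variable on $\T$ gives, for each fixed $z$,
\[
\int_\T (u(x+\e z)-u(x))^2\,dx = \sum_{k\in\Z} |u_k|^2\,\bigl|e^{2\pi i k\e z}-1\bigr|^2 = \sum_{k\in\Z}|u_k|^2\bigl(2-2\cos(2\pi k\e z)\bigr),
\]
where I use that $u$ is real, so $|e^{2\pi i k\e z}-1|^2 = 2-2\cos(2\pi k\e z)$ and the cross terms combine correctly into the modulus-squared. (One should be slightly careful that $u\in L^\infty(\T)\subset L^2(\T)$ so the series converges and Parseval applies.)

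Next I would integrate in $z$ against $e^{-z^2}$ and exchange sum and integral, justified by nonnegativity of the integrand (Tonelli):
\[
\int_\R e^{-z^2}\int_\T (u(x+\e z)-u(x))^2\,dx\,dz = \sum_{k\in\Z} |u_k|^2 \int_\R e^{-z^2}\bigl(2-2\cos(2\pi k\e z)\bigr)\,dz.
\]
The inner integral is computed from the standard Gaussian cosine transform $\int_\R e^{-z^2}\cos(bz)\,dz = \sqrt\pi\, e^{-b^2/4}$ with $b = 2\pi k\e$, giving $\int_\R e^{-z^2}(2-2\cos(2\pi k\e z))\,dz = 2\sqrt\pi\bigl(1 - e^{-\pi^2 k^2\e^2}\bigr)$. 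Substituting this back yields
\[
\int_\R e^{-z^2}\int_\T (u(x+\e z)-u(x))^2\,dx\,dz = 2\sqrt\pi\sum_{k\in\Z}|u_k|^2\bigl(1-e^{-\pi^2 k^2\e^2}\bigr) = 2\sqrt\pi\,\|u\|_\e^2,
\]
which is exactly the claimed identity.

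There is essentially no hard obstacle here; this is a computational lemma. The only points requiring a word of care are the interchange of summation and the $z$-integration (handled by Tonelli since all terms are nonnegative) and the evaluation of the Gaussian cosine integral (a standard contour-shift or differentiation-under-the-integral computation). Everything else is bookkeeping with Fourier coefficients.
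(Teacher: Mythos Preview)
Your proof is correct and takes essentially the same approach as the paper: both reduce the identity to a Fourier-mode computation and evaluate $\int_\R e^{-z^2}\cos(2\pi k\e z)\,dz = \sqrt\pi\, e^{-\pi^2 k^2\e^2}$. The paper phrases this as ``both sides are quadratic in $u$, so it suffices to check a single Fourier mode,'' whereas you write out the full Parseval expansion; the content is the same.
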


\begin{lemma}
\label{lemma:uksq}
For $\e>0$,
\begin{equation}
\label{ineq:lemma7}
\int_\R\int_\T e^{-z^2}(u(x)-\kappa_\e^z*u(x))^2 z^4\, dxdz
\leq \frac 56 \, \sqrt\pi \, \|u\|_{{\e}}^2.
\end{equation}
\end{lemma}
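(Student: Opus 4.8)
The plan is to diagonalize \eqref{ineq:lemma7} by Fourier series on $\T$, reducing it to a one-parameter family of one-dimensional identities and finally to a single elementary scalar inequality.

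\emph{Step 1 (Fourier reduction).} Write $u(x)=\sum_{k\in\Z}u_ke^{2\pi ikx}$. Since $\kappa^z_\e\geq0$ has total mass $1$, the periodic function $\kappa_\e^z*u$ has $k$-th Fourier coefficient $u_k\,\widehat{\kappa^z}(\e k)$, where $\widehat{\kappa^z}(\eta):=\int_\R\kappa^z(\sigma)e^{-2\pi i\eta\sigma}\,d\sigma$, and $|\widehat{\kappa^z}(\eta)|\leq1$ for every $\eta$. By Parseval, $\int_\T(u-\kappa_\e^z*u)^2\,dx=\sum_k|u_k|^2|1-\widehat{\kappa^z}(\e k)|^2$, a sum of non-negative terms dominated by $4\sum_k|u_k|^2=4\|u\|_{L^2(\T)}^2$. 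Hence, by Tonelli, the left-hand side of \eqref{ineq:lemma7} equals $\sum_k|u_k|^2\int_\R e^{-z^2}z^4|1-\widehat{\kappa^z}(\e k)|^2\,dz$, while $\|u\|_\e^2=\sum_k|u_k|^2(1-e^{-\pi^2k^2\e^2})$. It therefore suffices to prove, for every $k\in\Z$ (set $\beta:=\pi\e k$),
\[
\int_\R e^{-z^2}z^4\,\bigl|1-\widehat{\kappa^z}(\e k)\bigr|^2\,dz\ \leq\ \tfrac56\sqrt\pi\,\bigl(1-e^{-\beta^2}\bigr).
\]

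\emph{Step 2 (explicit evaluation).} A direct integration gives $\widehat{\kappa^z}(\eta)=\tfrac{2i}{\theta}+\tfrac{2(1-e^{i\theta})}{\theta^2}$ with $\theta=2\pi\eta z$, so that, putting $\eta=\e k$ and separating real and imaginary parts,
\[
z^4\bigl|1-\widehat{\kappa^z}(\e k)\bigr|^2=\Bigl(z^2-\tfrac{\sin^2\beta z}{\beta^2}\Bigr)^2+\tfrac{(2\beta z-\sin 2\beta z)^2}{4\beta^4}=\frac{P(2\beta z)}{16\beta^4},
\]
where $P(w):=(w^2-2+2\cos w)^2+4(w-\sin w)^2$. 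Expanding and simplifying via $\cos^2+\sin^2=1$ (the bare $w^2$-terms cancel) yields the convenient form $P(w)=w^4+4w^2\cos w-8w\sin w-8\cos w+8$. Integrating $e^{-z^2}P(2\beta z)$ term by term against the Gaussian — starting from $\int_\R e^{-z^2}\cos(bz)\,dz=\sqrt\pi\,e^{-b^2/4}$ and differentiating once and twice in $b$ to get $\int e^{-z^2}z\sin(bz)\,dz$ and $\int e^{-z^2}z^2\cos(bz)\,dz$ — one obtains after simplification
\[
\int_\R e^{-z^2}z^4\bigl|1-\widehat{\kappa^z}(\e k)\bigr|^2\,dz=\frac{\sqrt\pi}{16\beta^4}\Bigl[\,12\beta^4+8-8\,(2\beta^4+\beta^2+1)\,e^{-\beta^2}\,\Bigr].
\]

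\emph{Step 3 (a scalar inequality).} Inserting this expression and clearing denominators, the bound of Step 1 is equivalent, with $t:=\beta^2\geq0$, to $h(t)\geq0$, where $h(t):=(2t^2+6t+6)e^{-t}+t^2-6$. Now $h(0)=0$ and $h'(t)=2t\bigl[1-(t+1)e^{-t}\bigr]$; since $\tfrac{d}{dt}\bigl[(t+1)e^{-t}\bigr]=-te^{-t}\leq0$ and $(t+1)e^{-t}=1$ at $t=0$, one has $(t+1)e^{-t}\leq1$ on $[0,\infty)$, whence $h'\geq0$ and $h\geq0$. This proves the per-frequency bound of Step 1 and therefore \eqref{ineq:lemma7}.

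\emph{On the difficulty.} The only substantive computation is the Gaussian-moment evaluation in Step 2. The delicate point is that the scalar inequality of Step 3 is \emph{sharp}: $h(t)=\tfrac14t^4+O(t^5)$ as $t\to0$, and correspondingly $\tfrac56$ is the smallest constant for which \eqref{ineq:lemma7} can hold (take $u$ supported at a single low frequency). Consequently no lossy step is affordable — for instance a Cauchy–Schwarz estimate applied to the representation $z^2\bigl(u-\kappa_\e^z*u\bigr)(\cdot)=2\int_0^z(z-s)\bigl(u(\cdot+\e s)-u(\cdot)\bigr)\,ds$ already fails near $z=0$ — and the argument must rely on the exact algebraic identity for the integral together with the precise form of $h$.
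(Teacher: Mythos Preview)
Your proof is correct and follows essentially the same route as the paper: reduce to a single Fourier mode, compute the Gaussian integral explicitly, and finish with the scalar inequality $(t+1)e^{-t}\le 1$. The only differences are cosmetic---you parametrize by $\beta=\pi k\e$ and $t=\beta^2$ where the paper uses $\omega=2\pi k\e=2\beta$ and $s=\omega^2/4=t$, and your final function $h(t)=(2t^2+6t+6)e^{-t}+t^2-6$ is $-3$ times the paper's expression---and your closing remark that the constant $\tfrac56$ is sharp (since $h(t)=\tfrac14 t^4+O(t^5)$) is a nice observation the paper does not make.
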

\begin{lemma}
\label{lemma:Xee}
For $\alpha>0$ and $\e>0$,
\begin{equation}
\label{ineq:lemma8}
\|u\|_{{\e/\alpha}}\leq
\begin{cases}
\|u\|_{\e} & \text{if }\alpha\geq1\\
\frac1\alpha \|u\|_{\e} & \text{if }0<\alpha\leq 1,
\end{cases}
\end{equation}
where $\|\cdot\|_{\e/\alpha}$ should be interpreted as $\|\cdot\|_\e$ with $\e$ replaced by $\e/\alpha$.
\end{lemma}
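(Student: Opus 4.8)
The plan is to establish Lemma~\ref{lemma:Xee} by a direct, termwise comparison of the two Fourier series, exploiting monotonicity of the function $t\mapsto 1-e^{-t}$ in the scaling parameter. Writing out the definition,
\[
\|u\|_{\e/\alpha}^2 = \sum_{k\in\Z} |u_k|^2\bigl(1-e^{-\pi^2k^2\e^2/\alpha^2}\bigr),
\qquad
\|u\|_{\e}^2 = \sum_{k\in\Z} |u_k|^2\bigl(1-e^{-\pi^2k^2\e^2}\bigr),
\]
so it suffices to prove, for each fixed $k$, the scalar inequality relating $1-e^{-\pi^2k^2\e^2/\alpha^2}$ to $1-e^{-\pi^2k^2\e^2}$ with the stated constants; the series inequality then follows by summing over $k$ (all terms nonnegative). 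Setting $t := \pi^2 k^2\e^2 \geq 0$, the claim reduces to: (i) if $\alpha\geq1$ then $1-e^{-t/\alpha^2}\leq 1-e^{-t}$, and (ii) if $0<\alpha\leq1$ then $1-e^{-t/\alpha^2}\leq \alpha^{-2}(1-e^{-t})$.

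For part (i), since $\alpha\geq1$ we have $t/\alpha^2\leq t$, and $t\mapsto 1-e^{-t}$ is nondecreasing on $[0,\infty)$, so $1-e^{-t/\alpha^2}\leq 1-e^{-t}$ immediately. For part (ii), I would use concavity of $t\mapsto 1-e^{-t}$ on $[0,\infty)$: a concave function $g$ with $g(0)=0$ satisfies $g(\lambda t)\geq \lambda g(t)$ for $\lambda\in[0,1]$ and, equivalently, $g(t/\lambda)\leq \lambda^{-1}g(t)$ for $\lambda\in(0,1]$. Applying this with $g(t)=1-e^{-t}$ and $\lambda = \alpha^2\in(0,1]$ gives exactly $1-e^{-t/\alpha^2}\leq \alpha^{-2}(1-e^{-t})$. (Alternatively one checks that $s\mapsto (1-e^{-t/s})$ has the right monotonicity in $s$, or simply that $(1-e^{-cs})/s$ is nonincreasing in $s>0$ for fixed $c>0$, which is the same concavity fact in disguise.) Summing the per-mode inequalities over $k\in\Z$ and taking square roots yields the two cases of~\eqref{ineq:lemma8}.

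I do not expect a genuine obstacle here: the statement is elementary once reduced to the scalar inequality, and the only point requiring a line of care is the concavity argument in case $0<\alpha\leq1$, where one must be sure the constant is $1/\alpha$ (on the norm) rather than $1/\alpha^2$ — this is consistent because the $\alpha^{-2}$ appears at the level of the squared seminorm and becomes $\alpha^{-1}$ after taking the square root. If one prefers to avoid invoking concavity by name, the inequality $1-e^{-t/\alpha^2}\leq \alpha^{-2}(1-e^{-t})$ for $\alpha\le 1$ can also be verified by fixing $t$ and differentiating the difference $\alpha^{-2}(1-e^{-t}) - (1-e^{-t/\alpha^2})$ in $\alpha$, or by the substitution reducing it to the standard estimate $1-e^{-a}\le a$ together with $1-e^{-b}\ge b e^{-b}$; either route is a two-line computation.
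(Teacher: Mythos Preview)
Your proof is correct and follows essentially the same route as the paper: both reduce to the termwise scalar inequality $1-e^{-\beta t}\leq \beta(1-e^{-t})$ for $\beta\geq1$ (with $\beta=\alpha^{-2}$), the paper verifying it by differentiating both sides and you by invoking concavity of $t\mapsto 1-e^{-t}$ with $g(0)=0$. The monotonicity argument for $\alpha\geq1$ is identical, and your remark about the square root producing $1/\alpha$ rather than $1/\alpha^2$ is exactly right.
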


The proofs of these results are given in the appendix.

\subsection{Conclusion}
To alleviate notation we drop the caret from $\hat\delta$ and simply write $\delta$.
Following the discussion above  we estimate 
\begin{multline}
\int_\T 
%&
\rho_0(\xi)\sqrt{u(\xi)}
  \int_\R \exp[-(\kappa^z_\e*u)(\xi)z^2]\, dzd\xi
  = \int_\T\int_\R \rho_0(\xi)\sqrt{u(\xi)} e^{-u(\xi)z^2}\, dzd\xi\\
%&\qquad{}
+ \int_\T\int_\R \rho_0(\xi)\sqrt{u(\xi)} e^{-u(\xi)z^2}[u(\xi)-\kappa_\e^z*u(\xi)]z^2\, dzd\xi
+ R,
\label{split:I}
\end{multline}
where
\begin{align*}
R &= \int_\T \int_\R \rho_0(\xi) \sqrt{u(\xi)} e^{-u(\xi)z^2}
  \Bigl[\exp[(u(\xi)-\kappa^z_\e*u(\xi))z^2] - 1 - (u(\xi)-\kappa_\e^z*u(\xi))z^2\Bigr]\, dzd\xi\\
&\leq (1+\delta)^{3/2}\int_\T \int_\R  e^{-u(\xi)z^2}
  \Bigl[\exp[(u(\xi)-\kappa^z_\e*u(\xi))z^2] - 1 - (u(\xi)-\kappa_\e^z*u(\xi))z^2\Bigr]\, dzd\xi.
\end{align*}
Since $\|u-1\|_{L^\infty(\T)}\leq \delta$,  we have $\|u-\kappa_\e^z*u\|_{L^\infty(\T)}\leq 2\delta$ and therefore
\[
\exp[(u(\xi)-\kappa^z_\e*u(\xi))z^2] - 1 - (u(\xi)-\kappa_\e^z*u(\xi))z^2
\leq \frac12 e^{2\delta z^2} (u(\xi)-\kappa^z_\e*u(\xi))^2z^4, 
\]
so that
\begin{align*}
R&\leq \frac{(1+\delta)^{3/2}}2\int_\T \int_\R  e^{(-u(\xi)+2\delta)z^2}(u(\xi)-\kappa^z_\e*u(\xi))^2z^4
\, dzd\xi\\
&\leq \frac{(1+\delta)^{3/2}}2\int_\T \int_\R  e^{(-1+3\delta)z^2}(u(\xi)-\kappa^z_\e*u(\xi))^2z^4
\, dzd\xi.
\end{align*}
Setting $\alpha= \sqrt{1-3\delta}$ and $\zeta = \alpha z$, we find
\[
R\leq \frac{(1+\delta)^{3/2}}{2(1-3\delta)^{5/2}}\int_\T \int_\R e^{-\zeta^2}(u(\xi)-\kappa^{\zeta/\alpha}_\e*u(\xi))^2\zeta^4\, d\zeta d\xi.
\]
Noting that $\kappa_\e^{\zeta/\alpha} = \kappa_{\e/\alpha}^\zeta$, we have with $\tilde \e : =\e/\alpha = \e(1-3\delta)^{-1/2}$
\begin{align}
R &\leq \frac{(1+\delta)^{3/2}}{2(1-3\delta)^{5/2}}\int_\T \int_\R e^{-\zeta^2}(u(\xi)-\kappa^{\zeta}_{\tilde \e}*u(\xi))^2\zeta^4\, d\zeta d\xi\notag\\
&\stackrel{\pref{ineq:lemma7}}\leq \frac{(1+\delta)^{3/2}}{2(1-3\delta)^{5/2}}\, \frac 56 \, \sqrt\pi \, \|u\|_{{\tilde \e}}^2\notag\\
&\stackrel{\pref{ineq:lemma8}}\leq \frac{(1+\delta)^{3/2}}{2(1-3\delta)^{7/2}}\, \frac 56 \, \sqrt\pi \, \|u\|_{{\e}}^2.
\label{est:R}
\end{align}

\bigskip

We next calculate
\begin{equation}
\int_\T\int_\R \rho_0(\xi)\sqrt{u(\xi)} e^{-u(\xi)z^2}\, dzd\xi
= \int_\T\rho_0(\xi)\int_\R  e^{-\zeta^2}\, d\zeta d\xi
= \sqrt\pi \int_\T \rho_0(\xi)\, d\xi = \sqrt\pi.
\label{calc:simple_term}
\end{equation}

Finally we turn to the term
\[
I := \int_\T\int_\R \rho_0(\xi)\sqrt{u(\xi)} e^{-u(\xi)z^2}(u(\xi)-\kappa_\e^z*u(\xi))z^2\, dzd\xi.
\]
\begin{lemma}
\label{lemma:linear_term}
Let $\e>0$, let $\rho_0\in L^\infty(\T)\cap C([0,1])$ with $\int_\T \rho_0 = 1$, and let $u\in L^\infty(\T)$. Recall that $0<\delta<1/3$ with 
\[
\|\rho_0-1\|_{L^\infty(\T)}\leq \delta 
\qquad\text{and}\qquad 
\|u-1\|_{L^\infty(\T)}\leq \delta.
\]
Then 
\[
I\leq -\frac12 \frac{1-\delta}{(1+\delta)^2}\sqrt\pi 
% \sum_{k\in \Z} |\hat u_k|^2 (1-e^{-\pi^2k^2\e^2})
\|u\|_{\e}^2 + r_\e,
\]
where %$u(x) = \sum_{k\in\Z} \hat u_k e^{2\pi i k x}$, and 
$r_\e\to0$ uniformly in $\delta$.
\end{lemma}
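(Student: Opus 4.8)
The plan is to isolate the leading-order behaviour of the ``linear term''
\[
I = \int_\T\int_\R \rho_0(\xi)\sqrt{u(\xi)}\, e^{-u(\xi)z^2}\bigl(u(\xi)-\kappa_\e^z*u(\xi)\bigr)z^2\, dzd\xi
\]
by first performing the inner $z$-integral. The heuristic in the ``Main steps'' subsection tells us what to expect: after the substitution $\zeta = z\sqrt{u(\xi)}$, the factor $\sqrt u\, e^{-uz^2}z^2\,dz$ becomes $u^{-1}e^{-\zeta^2}\zeta^2\,d\zeta$, and the Gaussian moments $\int_\R e^{-\zeta^2}\zeta^2\,d\zeta = \tfrac12\sqrt\pi$ and $\int_\R e^{-\zeta^2}\zeta^4\,d\zeta = \tfrac34\sqrt\pi$ will appear. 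So the first step is to write $u(\xi)-\kappa_\e^z*u(\xi) = (u(\xi)-u(\xi+\e s))$-type differences under the convolution against $\kappa^z_\e$, and expand; the dominant piece is a quadratic form in the differences $u(x+\e z)-u(x)$, which by Lemma~\ref{lemma:FD} is exactly $2\sqrt\pi\|u\|_\e^2$. The sign is negative because, morally, $\kappa_\e^z*u - u \approx c\,u''\e^2 z^2$ and integrating against $e^{-uz^2}z^2$ produces $-2c\e^2\int (u')^2/u^3$, which is $\le 0$.

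The cleanest way to get the precise inequality with the stated constant is probably \emph{not} to Taylor-expand $u$ (which is only $L^\infty$, not smooth), but to use the exact identity for the kernel. I would write $\kappa_\e^z*u(\xi) - u(\xi)$ using that $\kappa^z$ is a probability density on $[-|z|\wedge? ,\dots]$ — more precisely, recall from \pref{calc:5} that $\tfrac{z^2\e^2}{2}(\kappa_\e^z*\ph_\e'')(\xi) = \ph_\e(\xi+\e z)-\ph_\e(\xi)-\e z\ph_\e'(\xi) = \e^2\int_0^z(z-s)\ph_\e''(\xi+\e s)\,ds$, so that $\kappa_\e^z*u(\xi) = \tfrac{2}{z^2}\int_0^z(z-s)u(\xi+\e s)\,ds$, i.e. $\kappa_\e^z*u$ is a weighted average of translates $u(\xi+\e s)$ with a probability weight on $s\in[0,z]$. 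Hence $u(\xi) - \kappa_\e^z*u(\xi) = \tfrac{2}{z^2}\int_0^z(z-s)\bigl(u(\xi)-u(\xi+\e s)\bigr)\,ds$. Substituting this into $I$, using Fubini to pull the $\xi$-integral inside, bounding $\rho_0\sqrt u\, e^{-uz^2}$ above/below by the extreme values governed by $\delta$ (so $\rho_0 \le 1+\delta$, $\sqrt u \le \sqrt{1+\delta}$, $e^{-uz^2}\le e^{-(1-\delta)z^2}$ where the sign of the difference is favourable, and the reverse where it is not), and then recognizing the resulting Gaussian-weighted quadratic form as a multiple of $\|u\|_\e^2$ via Lemma~\ref{lemma:FD} (after a rescaling handled by Lemma~\ref{lemma:Xee}), should produce the bound $I \le -\tfrac12\tfrac{1-\delta}{(1+\delta)^2}\sqrt\pi\,\|u\|_\e^2 + r_\e$. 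The remainder $r_\e$ collects (i) the difference between $\int_{K^{\e z}}$ and $\int_\T$ coming from the torus/interval mismatch, which is $O(\e)$ as in the proof of Lemma~\ref{lemma:approx_u}, and (ii) any boundary corrections from the substitution $\zeta=\alpha z$; both vanish as $\e\to0$ at a rate depending only on $\rho_0$, hence uniformly in $\delta$.

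The key step — and the main obstacle — is getting the \emph{constant} $\tfrac12\tfrac{1-\delta}{(1+\delta)^2}$ right with the correct sign, rather than merely $O(\e^2)$ negativity. This requires carefully tracking where $\delta$-dependent upper vs. lower bounds on $\rho_0(\xi)$, $u(\xi)$, and $e^{-u(\xi)z^2}$ must be used: one needs a \emph{lower} bound on the (negative) main term and an \emph{upper} bound is too lossy unless one keeps $\rho_0\sqrt u$ near its minimum exactly where the quadratic form is being extracted. The honest way is to separate $I$ into the ``diagonal'' contribution (where one replaces $\kappa_\e^z*u - u$ by the genuine second-difference structure that Lemma~\ref{lemma:FD} is built for) plus an error, and to verify that the error is absorbed into $r_\e$. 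A secondary subtlety is that $u$ need not be smooth, so all manipulations must stay at the level of finite differences and Fourier coefficients; fortunately Lemmas~\ref{lemma:FD}--\ref{lemma:Xee} are stated precisely for that purpose, so once the inner $z$-integral is reduced to the quadratic form $\int_\R e^{-z^2}\int_\T(u(x+\e z)-u(x))^2\,dx\,dz$ (up to rescaling and $\delta$-dependent prefactors), the conclusion is immediate. I would also double-check that the rescaling $z\mapsto\zeta$ needed to convert $e^{-u z^2}$ (with $u\ge 1-\delta$) into $e^{-\zeta^2}$ uses Lemma~\ref{lemma:Xee} in the direction $\alpha = \sqrt{1-\delta}\le 1$, contributing the factor $(1-\delta)^{-1}$ that, combined with the Jacobian and the moment ratios, yields exactly the claimed prefactor.
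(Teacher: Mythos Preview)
Your outline has the right ingredients---the integral representation $u(\xi)-\kappa_\e^z*u(\xi)=\tfrac2{z^2}\int_0^z(z-s)\bigl(u(\xi)-u(\xi+\e s)\bigr)\,ds$, Fubini, and the eventual appeal to Lemmas~\ref{lemma:FD} and~\ref{lemma:Xee}---but it is missing the one step that makes the argument work. The expression $u(\xi)-\kappa_\e^z*u(\xi)$ has \emph{no sign}: it is essentially a second difference of $u$. Consequently your plan to ``bound $\rho_0\sqrt u\,e^{-uz^2}$ above/below by the extreme values\dots where the sign of the difference is favourable, and the reverse where it is not'' cannot be carried out; there is no useful splitting into favourable and unfavourable regions. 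You acknowledge this difficulty, but the fix you propose (``separate $I$ into the diagonal contribution\dots plus an error'') does not say how to pass from a \emph{linear} functional of the differences $u(\xi)-u(\xi+\e s)$ to the \emph{quadratic} form $\int e^{-z^2}\int_\T (u(x+\e z)-u(x))^2\,dxdz$ that Lemma~\ref{lemma:FD} requires.

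The paper's mechanism is a discrete integration by parts in $\xi$ on the torus. After Fubini in $(z,\sigma)$ and symmetrizing in $z$, the inner $z$-integral becomes $\sigma\, h(\sigma^2 u(\xi))$ with $h(s)=s^{-1/2}\int_{\sqrt s}^\infty e^{-\zeta^2}(\zeta-\sqrt s)\,d\zeta$, and $I$ contains the second difference $u(\xi+\e\sigma)-2u(\xi)+u(\xi-\e\sigma)$. Summation by parts in~$\xi$ then yields two terms: one with $D_{\e\sigma}u(\xi)\cdot D_{\e\sigma}h(\sigma^2 u)(\xi)$, which by the chain rule and a sign-definite bound on $h'$ produces the nonnegative $(D_{\e\sigma}u)^2$ needed for Lemma~\ref{lemma:FD}; and a cross term with $D_{\e\sigma}\rho_0(\xi)$. \emph{That} cross term is the genuine source of $r_\e$---it vanishes via the modulus of continuity of $\rho_0$, which is exactly why the hypothesis $\rho_0\in C([0,1])$ is there---not the torus/interval mismatch you cite (that was already absorbed in Lemma~\ref{lemma:approx_u}). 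Finally, the rescaling that converts $e^{-(1+\delta)\sigma^2}$ to $e^{-s^2}$ uses $\alpha=\sqrt{1+\delta}\ge1$, i.e.\ the first case of Lemma~\ref{lemma:Xee}, not $\alpha=\sqrt{1-\delta}$ as you suggest.
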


From this Lemma and the earlier estimates the result follows. Combining Lemma~\ref{lemma:approx_u} with~\pref{split:I}, \pref{calc:simple_term}, Lemma~\ref{lemma:linear_term} and~\pref{est:R}, 
\begin{align*}
\sqrt\pi \, Z_\e &\leq \sqrt\pi 
-\frac12 \frac{1-\delta}{(1+\delta)^2}\sqrt\pi \|u\|_{\e}^2 
+ \frac{(1+\delta)^{3/2}}{(1-3\delta)^{7/2}}\, \frac 5{12} \, \sqrt\pi \, \|u\|_{{\e}}^2
+ S_\e,
\end{align*}
where $S_\e = \omega(\e) + r_\e$ converges to zero as $\e\to0$, uniformly in $\delta$. Since $1/2>5/12$, for sufficiently small $\delta>0$ the two middle terms add up to a negative value. Then it follows that $\limsup_{\e\to0} Z_\e\leq 1$.

\begin{proof}[Proof of Lemma~\ref{lemma:linear_term}]
Writing $I$ as 
\[
I = 2\int_\T \rho_0(\xi)\sqrt{u(\xi)} \int_\R \int_0^ze^{-u(\xi)z^2}
   (z-\sigma) (u(\xi)-u(\xi+\e \sigma))\, d\sigma dzd\xi,
\]
we apply Fubini's Lemma in the $(z,\sigma)$-plane to find
\begin{align*}
I &= -2\int_\T \rho_0(\xi)\sqrt{u(\xi)} \int_0^\infty \int_\sigma^\infty e^{-u(\xi)z^2}
  (z-\sigma) \bigl[u(\xi+\e \sigma)-2u(\xi)+u(\xi-\e \sigma)\bigr]\, dz d\sigma d\xi\\
 &= -2 \int_0^\infty \sigma  \int_\T \rho_0(\xi) \bigl[u(\xi+\e \sigma)-2u(\xi)+u(\xi-\e \sigma)\bigr]h(\sigma^2 u(\xi))\, d\xi d\sigma ,
\end{align*}
where
\begin{equation}
\label{est:h-upperbound}
h(s) := \frac1{\sqrt s} \int_{\sqrt s}^\infty e^{-\zeta^2}(\zeta-\sqrt s)\, d\zeta \;\leq \;\frac1{2\sqrt s} e^{-s}.
\end{equation}
Since $\|u-1\|_\infty \leq \delta$, 
\begin{equation}
\label{ineq:hprime}
h'(\sigma^2 u) = \frac{-1}{4\sigma^3 u^{3/2}} e^{-u\sigma^2} 
  \leq \frac{-1}{4\sigma^3} \, \frac1{(1+\delta)^{3/2}} e^{-(1+\delta)\sigma^2}.
\end{equation}
Then, writing $D_{\e\sigma}f(\xi)$ for $f(\xi+\e\sigma)-f(\xi)$, we have
\begin{align*}
\int_\T &\rho_0(\xi) \bigl[u(\xi+\e \sigma)-2u(\xi)+u(\xi-\e \sigma)\bigr] h(\sigma^2 u(\xi))\, d\xi =\\
&= -\int_\T \rho_0(\xi) D_{\e\sigma}u(\xi) D_{\e\sigma}h(\sigma^2u)(\xi)\, d\xi
 - \int_\T D_{\e\sigma}\rho_0(\xi) D_{\e\sigma}u(\xi) h(\sigma^2u(\xi+\e\sigma))\, d\xi,
\end{align*}
so that
\begin{align*}
I &= 2\int_0^\infty \sigma \int_\T \rho_0(\xi) D_{\e\sigma}u(\xi) D_{\e\sigma}h(\sigma^2u)(\xi)\, d\xi d\sigma
 +2\int_0^\infty \sigma  \int_\T D_{\e\sigma}\rho_0(\xi) D_{\e\sigma}u(\xi) h(\sigma^2u(\xi+\e\sigma))\, d\xi d\sigma\\
 &= I_a +I_b.
\end{align*}
Taking $I_b$ first, we estimate one part of this integral with~\eqref{est:h-upperbound} by
\begin{align*}
2\int_0^\infty &\sigma  \int_0^{1-\e\sigma} D_{\e\sigma}\rho_0(\xi) D_{\e\sigma}u(\xi) h(\sigma^2u(\xi+\e\sigma))\, d\xi d\sigma\\
&\leq 2\int_0^\infty \sigma \omega_{\rho_0}(\e\sigma)\,
  2\delta\, \frac1{2\sigma \sqrt{1-\delta}} e^{-(1-\delta)\sigma^2}\, d\sigma\\
&\leq \frac{2\delta}{\sqrt{1-\delta}} \int_0^\infty \omega_{\rho_0}(\e\sigma)
  e^{-(1-\delta)\sigma^2}\, d\sigma,
\end{align*}
and this converges to zero as $\e\to0$ uniformly in $0<\delta<1/3$. The remainder of $I_b$ we estimate
\begin{align*}
2\int_0^\infty &\sigma  \int_{1-\e\sigma}^1 D_{\e\sigma}\rho_0(\xi) D_{\e\sigma}u(\xi) h(\sigma^2u(\xi+\e\sigma))\, d\xi d\sigma\\
&\leq 2\int_0^\infty \e\sigma^2 \,2\delta\, \frac1{2\sigma \sqrt{1-\delta}} e^{-(1-\delta)\sigma^2}\, d\sigma\\
&= \frac{2\e\delta}{\sqrt{1-\delta}} \int_0^\infty \sigma
  e^{-(1-\delta)\sigma^2}\, d\sigma,
\end{align*}
which again converges to zero as $\e\to0$, uniformly in $\delta$.

To estimate $I_a$ we note that by~\eqref{ineq:hprime} and the chain rule,
\[
D_{\e\sigma} h(\sigma^2 u)(\xi)\leq -\frac{1}{4\sigma^3} \, \frac1{(1+\delta)^{3/2}} e^{-(1+\delta)\sigma^2}  D_{\e\sigma} u(\xi) \;\sigma^2,
\]
and thus
\begin{align*}
I_a &\stackrel{\pref{ineq:hprime}}\leq
  -\frac{1-\delta}{2(1+\delta)^{3/2}}\int_0^\infty e^{-(1+\delta)\sigma^2} \int_\T 
    (D_{\e\sigma}u(\xi))^2  \, d\xi d\sigma \\
& =-\frac{1-\delta}{2(1+\delta)^{2}}\int_0^\infty e^{-s^2} \int_\T 
    (D_{\e s /\sqrt{1+\delta}}u(\xi))^2  \, d\xi ds\\ 
&\stackrel{\eqref{id:X_e}}= -\frac{1-\delta}{2(1+\delta)^{2}}\sqrt\pi\|u\|^2_{{\e/ \sqrt{1+\delta}}} \\ 
&\stackrel{\eqref{ineq:lemma8}}\leq -\frac{1-\delta}{2(1+\delta)^2}\sqrt\pi\|u\|^2_{{\e}}.
\end{align*}
\end{proof}

\appendix
\section{Proofs of the Lemmas in Section~\ref{subsec:Xe}}

\begin{proof}[Proof of Lemma~\ref{lemma:FD}]
Since the left and right-hand sides are both quadratic in $u$, it is sufficient to prove the lemma for a single Fourier mode $u(x) = \exp 2\pi i k x$, for which
\begin{align*}
\int_\R  e^{-z^2} \int_\T (u(x+\e z)-u(x))^2 \, dx dz
&= \int_\R e^{-z^2} |\exp 2\pi i k \e z-1|^2\, dz\\
&= 2\int_\R e^{-z^2} (1-\cos 2\pi k\e z)\, dz\\
&= 2\sqrt \pi (1-e^{-\pi^2 k^2\e^2}),
\end{align*}
since 
\[
\int_\R e^{-z^2} \, dz = \sqrt \pi \qquad\text{and}\qquad
\int_\R e^{-z^2} \cos \omega z\, dz = \sqrt \pi \;e^{-\omega^2/4}.
\]
\end{proof}

\begin{proof}[Proof of Lemma~\ref{lemma:uksq}]
Again it is sufficient to prove the lemma for a single Fourier mode $u(x) = \exp 2\pi i k x$, for which 
\[
\int_\R\int_\T e^{-z^2}(u(x)-\kappa_\e^z*u(x))^2 z^4\, dxdz
= \int_\R e^{-z^2} z^4 |1-\widehat{\kappa_\e^z}(k)|^2\, dz.
\]
%Using the fact that 
%\[
%\kappa_\e^z * u(x) 
%= 2 \int_0^1 (1-\sigma)u(x+\e z \sigma)\, d\sigma,
%\]
Writing $\omega := 2\pi k \e$, the Fourier transform of $\kappa_\e^z$ on $\T$ is calculated to be 
\[
\widehat{\kappa_\e^z}(k) = \int_0^1 \kappa_\e^z (x) e^{-2\pi ikx}\, dx = -\frac{2}{\omega^2z^2} \bigl[e^{i\omega z} - 1 - i\omega z\bigr].
\]
Then
\[
1-\widehat{\kappa_\e^z}(k) = \frac2{\omega^2z^2} \Bigl[ e^{i\omega z} - 1 - i\omega z +\frac{\omega^2z^2}2\Bigr], 
\]
so that
\begin{align*}
z^4 |1-\widehat{\kappa_\e^z}(k)|^2 &= 
  \frac{4}{\omega^4}\Bigl[ \Bigl(1-\cos\omega z - \frac{\omega^2z^2}2\Bigr)^2 
  + (\sin\omega z - \omega z)^2\Bigr]\\
&=\frac{4}{\omega^4}\Bigl[ 2 - 2\cos\omega z+ \frac{\omega^4z^4}4
-2\omega z\sin\omega z+ \omega^2z^2 \cos\omega z\Bigr]. 
\end{align*}
We then calculate
\begin{align*}
&\int_\R e^{-z^2} z^4\, dz = \frac34 \sqrt \pi\\
&\int_\R e^{-z^2} \cos \omega z\, dz = \sqrt \pi \;e^{-\omega^2/4}\\
&\int_\R e^{-z^2} z\sin \omega z \, dz = \frac\omega 2\sqrt \pi\;e^{-\omega^2/4}\\
&\int_\R e^{-z^2} z^2 \cos \omega z\, dz = \sqrt \pi \;e^{-\omega^2/4} \Bigl(\frac12 - \frac{\omega^2}4\Bigr)
\end{align*}
implying that 
\begin{align*}
\int_\R e^{-z^2} z^4 |1-\widehat{\kappa_\e^z}(k)|^2\, dz
&= \frac{4\sqrt\pi}{\omega^4} \left[ 2 - 2e^{-\omega^2/4} + \frac3{16}\omega^4 
-\omega^2 e^{-\omega^2/4} + \omega^2e^{-\omega^2/4}\Bigl(\frac12 - \frac{\omega^2}4\Bigr)\right]\\
&= \frac{4\sqrt\pi}{\omega^4} \left[ 2 - 2e^{-\omega^2/4} + \frac3{16}\omega^4 
-\frac12{\omega^2} e^{-\omega^2/4} - \frac14\omega^4e^{-\omega^2/4} \right].
\end{align*}
We conclude the lemma by showing that the right-hand side is bounded from above by
\[
\frac56\sqrt\pi (1-e^{-\omega^2/4}).
\]
Indeed, subtracting the two we find
\[
\frac{4\sqrt\pi}{\omega^4} \left[ 2 - 2e^{-\omega^2/4} + \frac3{16}\omega^4 
-\frac12{\omega^2} e^{-\omega^2/4} - \frac14\omega^4e^{-\omega^2/4} - \frac5{24}\omega^4(1-e^{-\omega^2/4})\right],
\]
and setting $s:= \omega^2/4$ the sign of this expression is determined by
\[
2(1-e^{-s}) -\frac13s^2 - 2se^{-s} -\frac23s^2e^{-s}.
\]
This function is zero at $s=0$, and its derivative is
\[
-\frac23 s +\frac23 se^{-s} + \frac23 s^2e^{-s}
\]
which is negative for all $s\geq0$ by the inequality $e^{-s}(1+s)\leq 1$.
\end{proof}

\begin{proof}[Proof of Lemma~\ref{lemma:Xee}]
Since the function $\alpha\mapsto 1-e^{-\pi^2k^2\e^2/\alpha^2}$ is decreasing in $\alpha$, the first inequality follows immediately. To prove the second it is sufficient to show that $1-e^{-\beta x}\leq \beta (1-e^{-x})$ for $\beta>1$ and $x>0$, which can be recognized by differentiating both sides of the inequality.
\end{proof}

%
%\section{Lebesgue points and convolutions}
%In one dimension any monotonic kernel gives rise to a nicely shrinking set of sets; therefore the Lebesgue point property applies. Needs proof. See J. Yeh, Th 25.17.

%Proof: every superlevel set has the property that its measure is at least half of the diameter of the enclosing ball (i.e. the enclosing symmetric interval). 

\bibliographystyle{alpha}
\bibliography{ref}

\end{document}